\theoremstyle{plain} 
\newtheorem{thm}{Theorem}
\newtheorem{cor}{Corollary}
\newtheorem{lem}{Lemma}
\theoremstyle{definition}
\newtheorem{defn}{Definition}
\theoremstyle{remark} 
\newtheorem{ex}{Example}
\newtheorem{remark}{Remark}
\newcommand{\prob}{\mathsf{P}}
\newcommand{\unif}{{\sf Unif}}
\newcommand{\nm}{{\sf N}}
\newcommand{\bin}{{\sf Bin}}
\newcommand{\ber}{{\sf Ber}}
\newcommand{\chisq}{{\sf ChiSq}}
\newcommand{\B}{\mathcal{B}} 
\newcommand{\RR}{\mathbb{R}}
\newcommand{\XX}{\mathbb{X}}
\newcommand{\FF}{\mathbb{F}}
\newcommand{\LL}{\mathbb{L}}
\newcommand{\TT}{\mathbb{T}}
\newcommand{\ZZ}{\mathbb{Z}}
\newcommand{\sL}{\mathcal{L}}
\newcommand{\K}{\mathcal{K}}
\newcommand{\eps}{\varepsilon}
\newcommand{\cred}{\mathscr{C}}
\newcommand{\lPi}{\underline{\Pi}}
\newcommand{\uPi}{\overline{\Pi}}
\newcommand{\uGamma}{\overline{\Gamma}}
\title{Asymptotic efficiency of inferential models and a possibilistic Bernstein--von Mises theorem\footnote{This is an extended version of the paper \citep{imbvm} presented at the {\em 8th International Conference on Belief Functions}, in Belfast, UK.}}
\author{Ryan Martin \; and \; Jonathan P.~Williams}
\date{\today}
\begin{document}

\maketitle

\begin{abstract}    
The inferential model (IM) framework offers an alternative to the classical probabilistic (e.g., Bayesian and fiducial) uncertainty quantification in statistical inference.  
A key distinction is that classical uncertainty quantification takes the form of precise probabilities and offers only limited large-sample validity guarantees, whereas the IM's uncertainty quantification is imprecise in such a way that exact, finite-sample valid inference is possible.  
But are the IM's imprecision and finite-sample validity compatible with statistical efficiency?  That is, can IMs be both finite-sample valid and asymptotically efficient?  This paper gives an affirmative answer to this question via a new possibilistic Bernstein--von Mises theorem that parallels a fundamental Bayesian result.  Among other things, our result shows that the IM solution is efficient in the sense that, asymptotically, its credal set is the smallest that contains the Gaussian distribution with variance equal to the Cram\'er--Rao lower bound.  Moreover, a corresponding version of this new Bernstein--von Mises theorem is presented for problems that involve the elimination of nuisance parameters, which settles an open question concerning the relative efficiency of profiling-based versus extension-based marginalization strategies.


\smallskip

\emph{Keywords and phrases:} Bayesian; fiducial; Gaussian; large-sample; nuisance parameters; possibility theory; relative likelihood.
\end{abstract}

\section{Introduction}
\label{S:intro}

According to \citet{efron.cd.discuss}, ``the most important unresolved problem in statistical inference is the use of Bayes theorem in the absence of prior information.''  Numerous attempts have been made to resolve the problem, including Bayesian inference with default priors \citep{jeffreys1946, bergerbernardosun2009}, fiducial inference in Fisher's original sense \citep{fisher1935a, zabell1992} and its generalizations \citep{fraser1968, hannig.review, dempster1967, dempster2008}, and the various imprecise-probabilistic proposals \citep[e.g.,][]{berger1984, walley1991, dubois2006, augustin.etal.bookchapter}.  Many of these solutions offer a large-sample result that goes roughly as follows: the associated credible sets achieve the nominal frequentist coverage probability when the sample size, $n$, is large.  Statisticians insist on methods that are reliable in the sense of tending to report ``correct inferences'' across repeated uses, at least asymptotically, so results like this are high-value.  The key result is the so-called {\em Bernstein--von Mises theorem} which says that, under certain regularity conditions, the Bayesian or fiducial posterior distribution is approximately Gaussian, centered at the maximum likelihood estimator, and is efficient in the sense that its variance equals the Cram\'er--Rao lower bound; here ``approximately'' is in the sense that the total variation distance between the two distributions is vanishing in probability as $n \to \infty$.  For details, see \citet[][Ch.~10]{vaart1998}, \citet[][Ch.~4]{ghosh-etal-book}, or \citet{hannig.review}.  Among other things, this result ensures that the Bayesian and fiducial uncertainty quantification at least approximately meets the kind of reliability requirements mentioned above when the sample size $n$ is large.  

Still, the above Bayesian and fiducial developments have not proved to be fully satisfactory, perhaps because large-sample confidence sets fail to tell the whole story.  Indeed, the false confidence theorem \citep{balch.martin.ferson.2017, martin.nonadditive, williams.fct.2018} implies that, whether $n$ is small or large, there exists {\em false} hypotheses to which the (Bayesian or fiducial) posterior distribution will tend to assign relatively high probability or {\em confidence}.  False confidence is related to other familiar-yet-mysterious phenomena such as Stein's paradox \citep{stein1959}, the non-existence of finite-length confidence intervals \citep{gleser.hwang.1987}, and that the ``confidence'' properties enjoyed by confidence distributions are not preserved under probability calculus \citep[e.g.,][]{fraser.cd.discuss, fraser2011.rejoinder, schweder.hjort.2013}.  To eschew the inherent unreliability revealed by the false confidence theorem, precise posterior probabilities must be replaced by imprecise probabilities.  The inferential model (IM) framework, including the original developments in \citet{imbasics, imbook} and the more recent generalizations in \citet{imchar, martin.partial, martin.partial2}, does just this.  Their proposal is provably valid in a sense that implies safety from false confidence and, e.g., it provides exact finite-sample confidence sets.  But one might guess that in order to be provably valid, the IM must sacrifice on efficiency---that is, the aforementioned IM confidence sets must be larger than the asymptotically efficient Bayesian or fiducial credible sets.  The examples in the aforementioned references all demonstrate, however, that, despite the imprecision and exact validity, the IM's solutions are no less (and sometimes more) efficient than their Bayesian and fiducial counterparts.  But so far there are no general theoretical results concerning the IM's efficiency.  

Towards addressing this question about efficiency, this paper---an extended version of our conference paper, \citet{imbvm}---presents a possibilistic version of the Bernstein--von Mises theorem that applies to the class of likelihood-based IMs considered in \citet{plausfn, gim, martin.partial2}.  Specifically, we show that a version of the IM's possibility contour, properly centered and scaled, can be accurately approximated by a Gaussian possibility contour with covariance matrix equal to the Cram\'er--Rao lower bound.  More precisely, when $n$ is large, the IM's credal set is the smallest that contains the Gaussian distribution with variance equal to the Cram\'er--Rao lower bound. This result confirms our conjecture that, despite the IM's inherent imprecision, which is necessary to achieve exact validity, there is no loss of efficiency asymptotically.   It also generalizes the characterization result offered in \citet{martin.isipta2023} for group transformation models: for large $n$, the Bayesian and fiducial solutions (are roughly the same and) correspond to the inner probabilistic approximation of the IM, i.e., the Bayesian/fiducial posterior distribution is the ``most diffuse'' element in the IM's asymptotic credal set.  

This result is extended to the practically important case where to-be-eliminated nuisance parameters are present, i.e., where only some feature of the full model parameter is of interest.  There are two broadly acceptable ways to carry out marginalization in this possibilistic framework: one is based on the formal extension principle, which is purely possibility-theory-motivated, and the other is based on suitable profiling, which has statistical origins.  Empirical evidence \citep[e.g.,][]{imchar, martin.partial3} strongly suggests that the profiling-based strategy is more efficient than the extension-based strategy, but so far there is no general theory available that confirms or explains this phenomenon.  The asymptotic analysis presented in Section~\ref{S:nuisance} formally settles this matter by demonstrating that, as conjectured, the profile-based strategy gives an asymptotically tighter possibility contour than the extension-based strategy.

\section{Background}
\label{S:background}

\subsection{Possibility theory}
\label{SS:possibility}

\subsubsection{Generalities}

Possibility theory is among the simplest imprecise probability theories, corresponding to so-called {\em consonant} belief structures \citep[e.g.,][Ch.~10]{shafer1976}, i.e., belief and plausibility functions with nested focal elements.  Other key references include \citet{zadeh1978}, \citet{dubois.prade.book}, and \citet{dubois2006}. This simplicity comes in exchange for certain limits on its expressiveness, but Shafer argues that this trade-off can be justified in the statistical inference problems under consideration here:
\begin{quote}
{\em 
...Specific items of evidence often can be treated as consonant, and there is at least one general type of evidence that seems well adapted to such treatment.  This is inferential evidence---the evidence for a cause that is provided by an effect.} \citep[][p.~226]{shafer1976}
\end{quote}
The simplicity of possibilistic uncertainty quantification comes from its parallels to precise probability theory.  A necessity--possibility measure pair $(\lPi, \uPi)$ that is intended to quantify uncertainty about an uncertain $Z$ in $\ZZ$ is determined by a possibility contour $\pi: \ZZ \to [0,1]$, with $\sup_{z \in \ZZ} \pi(z) = 1$, via the rules
\[ \uPi(B) = \sup_{z \in B} \pi(z) \quad \text{and} \quad \lPi(B) = 1 - \sup_{z \not\in B} \pi(z), \quad B \subseteq \ZZ. \]
So, where ordinary probability is determined by integrating a density function, possibility is determined by optimizing a contour.  The values $\{\lPi(B), \uPi(B)\}$ are often interpreted subjectively, either as (coherent) upper and lower probabilities associated with the proposition ``$Z \in B$''---i.e., as maximum buying price and minimum selling price, respectively, for gambles $\$1(Z \in B)$ in the sense of \citet{walley1991}---or as Shaferian degrees of belief.  ``Coherence'' ensures, among other things, that the possibility measure $\uPi$ determines and is determined by its associated {\em credal set} \citep[e.g.,][Ch.~4]{levi1980}, which is the set of all ordinary/precise probabilities dominated by $\uPi$ in the sense that the numerical probabilities they assign to events are no larger than the possibilities assigned to the same events by $\uPi$.  Mathematically, the credal set is given by
\[ \cred(\uPi) = \{ \prob \in \text{probs}(\ZZ): \prob(B) \leq \uPi(B) \text{ for all $B \in \B_\ZZ$} \}, \]
where $\text{probs}(\ZZ)$ is the set of all probability measures supported on $(\ZZ, \B_\ZZ)$, with $\B_\ZZ$ the Borel $\sigma$-algebra of subsets of $\ZZ$.  So, the credal set is the non-empty, closed, and convex collection of probability measures dominated by $\uPi$, and $\uPi$ can be recovered as the upper envelope $\uPi(\cdot) = \sup\{ \prob(\cdot): \prob \in \cred(\uPi)\}$.  For a possibility measure $\uPi$, there is a simple characterization of its corresponding credal set $\cred(\uPi)$ in terms of its contour $\pi$ \citep[e.g.,][]{destercke.dubois.2014, cuoso.etal.2001}:
\begin{equation}
\label{eq:credal}
\prob \in \cred(\uPi) \iff \prob\{ \pi(Z) \leq \alpha \} \leq \alpha \quad \text{for all $\alpha \in [0,1]$}. 
\end{equation}
That is, a probability $\prob$ is an element of $\cred(\uPi)$ if its tails agree with $\pi$'s tails in the sense that $\{z: \pi(z) \leq \alpha\}$ has $\prob$-probability no more than $\alpha$ for each $\alpha$.

The aforementioned limits on the possibility measure's expressiveness are the following simple constraints: $\lPi(B) > 0$ implies $\uPi(B)=1$ and $\uPi(B) < 1$ implies $\lPi(B)=0$.  This is intuitively clear from the interpretation of $(\lPi,\uPi)$ as necessity and possibility: if there is any genuine shred of evidence that implies $B$, then $B$ must be absolutely possible; and if there is any doubt that $B$ is possible, then there can be no genuine shred of evidence that implies $B$. These limitations do not affect us here, however, because meaningful statistical inferences can be made only when the $\lPi$ values are large or the $\uPi$ values are small.  We will return to this point in Section~\ref{SS:im} below when we discuss statistical inference.  

Imprecise probabilities in general, and possibility measures in particular, can be {\em extended} so that our quantification of uncertainty about, say, $Z$ gets mapped to a corresponding quantification of uncertainty about a new variable, say, $W=f(Z,\ldots)$, which is a function of $Z$ and perhaps other things.  The dependence on ``other things'' is why this is called an ``extension''---we are reaching beyond $Z$ to make judgments about quantities that are not fully determined by $Z$---but here we will focus on the case where $W=f(Z)$ is a function of $Z$ alone; this is the case of statistical relevance in Section~\ref{S:nuisance} below.  Then Zadeh's {\em extension principle} \citep[e.g.,][]{zadeh1975a} for possibility theory says that the possibility contour for $W$ is given by 
\[ \pi^f(w) = \sup_{z: f(z)=w} \pi(z), \quad w \in \mathbb{W} = f(\ZZ). \]
This is consistent with the use of optimization to carry out ``probability'' calculations in terms of the contour $\pi$.  Indeed, if we define the set $B_w=\{z: f(z)=w\}$, then 
\[ \uPi(B_w) = \sup_{z \in B_w} \pi(z) = \sup_{z: f(z)=w} \pi(z) = \pi^f(w). \]
So this mapping of our quantification of uncertainty from $Z$ to $W=f(Z)$ follows from the basic possibilistic calculus that relies on optimization of the contour function.

\subsubsection{Special case: Gaussian possibility} 

One way to elicit a possibility measure, specifically relevant to us here, is via the {\em probability-to-possibility transform} \citep[e.g.,][]{dubois.etal.2004, hose2022thesis}.  If $p$ is a probability density function, which determines a random variable $Z \sim \prob$, then the probability-to-possibility transform defines the contour $\pi$ as 
\[ \pi(z) = \prob\{ p(Z) \leq p(z) \}, \quad z \in \ZZ. \]
This defines the ``best approximation'' of $\prob$ by a possibility measure in the sense that its corresponding credal set is the smallest one that contains $\prob$; see, e.g., \citet{dubois.prade.1990}.  If $p$ is the $D$-dimensional normal density on $\ZZ$, with mean vector $\mu$ and covariance matrix $\Sigma$, i.e., 
\[ p(z) = (2\pi)^{-D/2} \, \det(\Sigma)^{-1/2} \, \exp\bigl\{ -\tfrac12 (z-\mu)^\top \Sigma^{-1} (z-\mu) \bigr\}, \quad z \in \RR^D, \]
then the probability-to-possibility transform has contour denoted as $\gamma_{\mu, \Sigma}$ and given by 
\begin{align*}
\gamma_{\mu, \Sigma}(z) & = \prob\{ p(Z) \leq p(z) \} \\
& = \prob\bigl[ \exp\{ -\tfrac12 (Z-\mu)^\top \Sigma^{-1} (Z-\mu) \} \leq \exp\{ -\tfrac12 (z-\mu)^\top \Sigma^{-1} (z-\mu) \} \bigr] \\
& = \prob\bigl\{ (Z-\mu)^\top \Sigma^{-1} (Z-\mu) \geq (z-\mu)^\top \Sigma^{-1} (z-\mu) \bigr\} \\
& = 1 - F_D\bigl\{ (z-\mu)^\top \Sigma^{-1} (z-\mu) \bigr\}, 
\end{align*}
where $F_D$ is the $\chisq(D)$ distribution function.  Note that what we are referring to here as the ``Gaussian possibility contour'' is different from the {\em Gaussian possibility distribution, with $\pi(z) = \exp\{-\frac12 (z-\mu)^\top \Sigma^{-1} (z-\mu)\}$, that is often found in the literature \citep[e.g.,][]{denoeux.fuzzy.2022}.} The corresponding Gaussian possibility measure, defined via optimization, is denoted by $\uGamma_{\mu, \Sigma}$.  The notation $\gamma$ and $\uGamma$ will be used for the standard Gaussian possibility contour and corresponding possibility measure.  A plot of the contour $z \mapsto \gamma(z)$ for $D=1$ can be seen in Figure~\ref{fig:bernoulli}(a) below.  

An interesting application of the extension principle described above is to the special case of the Gaussian possibility measure.  For reasons that will be clear in Section~\ref{S:nuisance} below, consider the case where $Z$ partitions as $Z=(Z_1, Z_2)$, where $Z_j$ is of dimension $D_j$, with $D=D_1+D_2$, and interest is in the first component, $Z_1$.  The mean vector $\mu$, the covariance matrix $\Sigma$, and its inverse $\Sigma^{-1}$ have the corresponding partitions: 
\[ \mu=(\mu_1, \mu_2), \quad \Sigma = \begin{pmatrix} \Sigma_{11} & \Sigma_{12} \\ \Sigma_{21} & \Sigma_{22} \end{pmatrix}, \quad \text{and} \quad \Sigma^{-1} = \begin{pmatrix} \Sigma^{11} & \Sigma^{12} \\ \Sigma^{21} & \Sigma^{22} \end{pmatrix}. \]
A closed-form expression for the contour function of $Z_1$ can be obtained, by differentiating $\gamma_{\mu, \Sigma}(z_1, z_2)$ with respect to $z_2$, setting equal to 0, and solving for $z_2$ in terms of $(z_1, \mu, \Sigma)$.  The steps are a bit tedious, but the result is relatively simple.  That is, the possibility contour for $Z_1$, derived from $\gamma_{\mu, \Sigma}$ based on the extension principle, is 
\begin{equation}
\label{eq:gauss.extension}
z_1 \mapsto 1 - F_D\bigl[ (z_1 - \mu_1)^\top \{\Sigma^{11} - \Sigma^{12} (\Sigma^{22})^{-1} \Sigma^{21}\} (z_1 - \mu_1) \bigr]. 
\end{equation}
The right-hand side is a genuine possibility contour, and it closely resembles a Gaussian with mean vector $\mu_1 \in \RR^{D_1}$ and $D_1 \times D_1$ covariance matrix $\{\Sigma^{11} - \Sigma^{12} (\Sigma^{22})^{-1} \Sigma^{21}\}^{-1}$.  There is, however, one subtle difference that makes it not exactly a Gaussian contour: note that the degrees of freedom in the chi-square is $D$, rather than $D_1$ as is necessary to match the Gaussian possibility form.  This dimension-mismatch will come up again in Section~\ref{S:nuisance} as it relates to elimination of nuisance parameters.  


The previous paragraph reveals an unexpected property of our ``Gaussian possibility measure,'' namely, that the marginal possibility contour for, say, $Z_1$, derived---via the extension principle---from the joint Gaussian possibility measure for $Z=(Z_1,Z_2)$ is {\em not exactly Gaussian}.  We say ``unexpected'' because anyone reading this is as sure that ``joint Gaussianity implies marginal Gaussianity'' as he/she is sure the sun will rise tomorrow.  This apparent discrepancy can be easily resolved, however, by viewing the matter from the proper perspective.  The point is that the extension-based marginalization does not take ``joint Gaussianity'' to some kind of strict ``marginal non-Gaussianity,'' the effect is more subtle.  Recall that the chi-square distribution function has the property that $\nu \mapsto F_\nu(x)$ is strictly decreasing for each $x > 0$.  From this it follows that 
\begin{align*}
\underbrace{1 - F_{D_1}\{ (z_1 - \mu_1)^\top \Upsilon^{-1} (z_1 - \mu_1) \}}_{\text{exact Gaussian possibility, } \gamma_{\mu_1, \Upsilon}(z_1)} < \underbrace{1 - F_D\{ (z_1 - \mu_1)^\top \Upsilon^{-1} (z_1 - \mu_1)\}}_{\text{extension-based marginal possibility}},
\end{align*}
where $\Upsilon = \{\Sigma^{11} - \Sigma^{12} (\Sigma^{22})^{-1} \Sigma^{21}\}^{-1}$.  Both sides are possibility contours in $z_1$, so the above dominance implies that the credal set corresponding to the possibility contour on the right-hand side, which is that obtained via application of the extension principle, contains the credal set corresponding to that on the left-hand side.  Since the left-hand side credal set contains $\nm_{D_1}(\mu_1, \Upsilon)$, it follows that the right-hand side credal set does too.  Therefore, the possibility measure for $Z_1$ arrived at by applying the extension principle to the joint Gaussian contour for $(Z_1,Z_2)$ is still ``Gaussian'' in the sense that it (strictly) contains the credal set corresponding to the Gaussian possibility $\uGamma_{\mu_1, \Upsilon}$, which is the smallest such credal sets that contains $\nm_{D_1}(\mu_1, \Upsilon)$.  In other words, the extension-based marginalization is somewhat cautious, since it allows some distributions that are more diffuse than $\nm_{D_1}(\mu_1, \Upsilon)$ into its credal set.  We think this is not so surprising, given that the extension principle is a general operation that must accommodate, among other things, projection down to any margin, not just to $Z_1$ or to $Z_2$.  In the statistical inference context described below, the extension principle ensures that statistical validity is achieved uniformly over all possible margins, so it is unrealistic to expect that it can do so ``optimally'' for every individual margin.

\subsection{Inferential models}
\label{SS:im}

The original IM constructions \citep[e.g.,][]{imbasics, imbook} used (nested) random sets and, hence, the connection to possibility theory was indirect.  A more streamlined version in \citet{martin.partial2} directly defines the IM's possibility contour using the probability-to-possibility transform; see, also, \citet{plausfn, gim}.  An advantage of this new and direct construction is that it avoids the ambiguity in specifying both a data-generating equation and a so-called ``predictive random set''---one only needs the model/likelihood function.  

The statistical model assumes that $X^n=(X_1,\ldots,X_n)$ consists of iid samples from a distribution $\prob_\Theta$ depending on an unknown/uncertain $\Theta \in \TT$ to be inferred.  The model and observed data $X^n=x^n$ together determine a likelihood function $\theta \mapsto L_{x^n}(\theta)$ and a corresponding relative likelihood function
\[ 
R(x^n,\theta) = \frac{L_{x^n}(\theta)}{\sup_\vartheta L_{x^n}(\vartheta)}.
\]
The relative likelihood itself defines a data-dependent possibility contour that has been widely studied \citep[e.g.,][]{shafer1982, wasserman1990b, denoeux2006, denoeux2014}.  Most appealing about the likelihood-based possibility contour is its shape: peak at the maximum likelihood estimator $\hat\theta_{x^n}$ and consistent with Fisher's suggested likelihood-based preference order on the parameter space.  But the relative likelihood lacks a standard scale, i.e., what constitutes a ``small'' relative likelihood depends on aspects of the individual application.  A (literally) uniform scale of interpretation across applications can easily be obtained via what \citet{martin.partial} calls ``validification,'' which is a sort of possibilistic transform.  In particular, for observed data $X^n=x^n$, the possibilistic IM's contour is
\begin{equation}
\label{eq:contour}
\pi_{x^n}(\theta) = \prob_\theta\{ R(X^n,\theta) \leq R(x^n, \theta) \}, \quad \theta \in \TT.
\end{equation}
This is the same contour obtained in \citet{plausfn, gim} based on more-or-less the original IM construction---with a so-called ``generalized association'' based on the relative likelihood and a simple nested-interval predictive random set.  It also corresponds to the p-value associated with testing the hypothesis ``$\Theta=\theta$'' using the usual likelihood ratio test statistic; but see \citet{martin.partial2} for a deeper, principled justification.  The corresponding possibility measure, obtained via optimization, is
\[ \uPi_{x^n}(H) = \sup_{\theta \in H} \pi_{x^n}(\theta), \quad H \subseteq \TT, \]
and $\lPi_{x^n}(H) = 1 - \uPi_{x^n}(H^c)$ is its necessity measure.  Aside from the Shaferian interpretation of $\lPi_{x^n}$ and $\uPi_{x^n}$ as reporting $x^n$-dependent degrees of belief and plausibility about $\Theta$, there is a behavioral interpretation in terms of prices an agent---the data analyst, say---is willing to pay for gambles $\$1(\Theta \in H)$ as $H \subseteq \TT$ varies.  More specifically, one can interpret the IM's necessity/possibility or lower/upper probabilities as  
\begin{align*}
\lPi_{x^n}(H) & = \text{supremum price the data analyst would pay to buy $\$1(\Theta \in H)$} \\
\uPi_{x^n}(H) & = \text{infimum price the data analyst would accept to sell $\$1(\Theta \in H)$}.
\end{align*}
From this, various properties enjoyed by a necessity--possibility measure pair can be given a behavioral spin in terms of rationality.  For example, from the definition of a possibility measure, it is easy to show directly that $\lPi_{x^n}(\cdot) \leq \uPi_{x^n}(\cdot)$; in terms of buying/selling prices for gambles, note that if there was some $H$ for which this bound failed, i.e., $\lPi_{x^n}(H) > \uPi_{x^n}(H)$, then another agent could buy $\$1(\Theta \in H)$ from the data analyst and then sell it back to him/her immediately for more money, creating a sure-loss for the data analyst.  More generally, for each fixed $x^n$, the necessity--possibility pair $(\lPi_{x^n}, \uPi_{x^n})$ is a coherent imprecise probability and, therefore, free of such irrationality.  Beyond that, there is a stronger form of no-sure-loss that the possibilistic IM offers, one that considers the IM as an ``updating'' of prior beliefs in light of data; this is more complicated and beyond our present scope, so the reader is referred to \citet[][Sec.~5.2.2]{martin.partial2} for details. 

Critical to the IM developments, and seemingly (but not actually) unrelated to the behavioral properties just described, is the so-called {\em validity property}:
\begin{equation}
\label{eq:valid}
\sup_{\Theta \in \TT} \prob_\Theta\bigl\{ \pi_{X^n}(\Theta) \leq \alpha \bigr\} \leq \alpha, \quad \text{for all $\alpha \in [0,1]$}. 
\end{equation}
This is precisely the universal scaling that the relative likelihood is missing on its own, which implies that ``$\pi_{x^n}(\theta) \leq \alpha$'' has the same inferential meaning/force in every application.  It is also the familiar property satisfied by p-values, proved by an application of Fisher's probability integral transform.  By maxitivity of $\uPi_{X^n}$, it is easy to see that 
\[ \pi_{X^n}(\Theta) \leq \uPi_{X^n}(H) \quad \text{for all $H \ni \Theta$}. \]
Then an immediate consequence of \eqref{eq:valid} is 
\begin{equation}
\label{eq:valid.old}
\sup_{\Theta \in H} \prob_\Theta\{ \uPi_{X^n}(H) \leq \alpha \} \leq \alpha, \quad \text{all $\alpha \in [0,1]$ and all $H \subseteq \TT$}. 
\end{equation}
This is the original version of validity in, e.g., \citet{imbasics}; since \eqref{eq:valid.old} is a consequence of \eqref{eq:valid}, the latter is sometimes referred to as {\em strong validity}.  Property \eqref{eq:valid.old} implies that the IM is safe from false confidence: the IM will {\em not} tend to assign low plausibility to true hypotheses or, equivalently, it will not tend to assign high confidence, $\lPi_{X^n}(H)$, to false hypotheses $H$ about $\Theta$.  Finally, \eqref{eq:valid} and \eqref{eq:valid.old} together have some important and familiar statistical consequences:
\begin{itemize}
\item a test that rejects the hypothesis ``$\Theta \in H$'' when $\uPi_{x^n}(H) \leq \alpha$ will control the frequentist Type~I error rate at level $\alpha$, and 
\vspace{-2mm}
\item the set $C_\alpha(x^n) = \{\theta \in \TT: \pi_{x^n}(\theta) > \alpha\}$ is a $100(1-\alpha)$\% frequentist confidence set in the sense that its coverage probability is at least $1-\alpha$.
\end{itemize}

The argument above showing that \eqref{eq:valid.old} is a consequence of \eqref{eq:valid} actually establishes more, which is why \eqref{eq:valid} is indeed stronger than \eqref{eq:valid.old}.  Instead of simply offering calibration for fixed $H$'s, the IM offers calibration {\em uniformly} over hypotheses $H$:
\begin{equation}
\label{eq:valid.uniform}
\sup_\Theta \prob_\Theta\{ \uPi_{X^n}(H) \leq \alpha \text{ for some $H$ with $H \ni \Theta$} \} \leq \alpha, \quad \text{all $\alpha \in [0,1]$}. 
\end{equation}
This uniformity implies that the possibilistic IM offers more than tests of predetermined hypotheses, which is important because scientific investigators want/need more than that.  That is, beyond basic significance tests, investigators aiming for ``scientific discoveries'' must be able to probe for other hypotheses that are or are not supported by the data, and even reliable tests of a fixed hypotheses cannot offer reliable probing in this sense \citep[e.g.,][]{mayo.book.2018}.  We refer the reader to \citet{cella.martin.probing} for more discussion of probing and the relevance of \eqref{eq:valid.uniform}.  The point we want to end on here is that the properties above---specifically \eqref{eq:valid.uniform}---are unique to IMs that have the mathematical form of a possibility measure.  So, while there are some mathematical limits to the possibilistic IM's expressiveness, as we mentioned above, it more than makes up for these limits when it comes to reliability, which is our top priority.  

Given that the IM is inherently imprecise and offers strong, finite-sample reliability guarantees, the reader would be tempted to believe that the IM gives much more conservative results in applications---e.g., wider confidence limits---compared to, say, Bayesian solutions which are precise and asymptotically efficient.  This is not the case, however, and the next section establishes an asymptotic Gaussianity/efficiency result for IMs that debunks this myth of conservatism.  At a high-level, this combination of imprecision and (asymptotic) efficiency is made possible by the aforementioned limited expressiveness of the IM's consonant belief structure; see Remark~\ref{re:credal} below.

\section{Possibilistic Bernstein--von Mises theorem}
\label{S:bvm}

\subsection{Preview}

To build some intuition, consider a simple Gaussian model where $X^n = (X_1,\ldots,X_n)$ is an iid sample from $\nm(\Theta,\sigma^2)$, where the mean $\Theta$ is unknown  but $\sigma^2$ is known.  The maximum likelihood estimator is $\hat\theta_{x^n}=\bar x$, the sample mean, and the relative likelihood is $R(x^n, \theta) = \exp\{-\tfrac{n}{\sigma^2}(\hat\theta_{x^n} - \theta)^2\}$. From here it follows by the definition of the Gaussian possibility contour $\gamma$ in Section~\ref{SS:possibility} that the exact IM possibility contour is Gaussian:
\begin{align*}
\pi_{x^n}(\theta) & = \prob_\theta\{ R(X^n, \theta) \leq R(x^n, \theta) \} = \gamma\{ n^{1/2}(\theta-\hat\theta_{x^n}) / \sigma \}, \quad \theta \in \RR.
\end{align*}
Switching to a local parametrization, namely, $\theta = \hat\theta_{x^n} + \sigma n^{-1/2} z$, gives 
\[ \check\pi_{x^n}(z) := \pi_{x^n}(\hat\theta_{x^n} + \sigma n^{-1/2} z) = \gamma(z), \quad z \in \RR, \]
i.e., the localized/standardized IM contour is exactly standard Gaussian.  Then, for a generic $H \subseteq \TT$, the corresponding possibility measure is 
\begin{align*}
\uPi_{x^n}(H) & = 
\uGamma\bigl\{ n^{1/2}(H - \hat\theta_{x^n})/\sigma \bigr\}, 
\end{align*}
where $\uGamma$ is the Gaussian possibility measure from Section~\ref{SS:possibility} and, for constants $a$ and $b$, the set $aH+b$ is defined as $aH + b = \{a\theta + b: \theta \in H\}$.

Beyond an exactly Gaussian model, there is no longer an exact correspondence between the possibilistic IM's solution and the Gaussian possibility measure.  It's a similar story in the Bayesian and (generalized) fiducial case.  But the classical, probabilistic Bernstein--von Mises theorem implies that, under certain mild/standard regularity conditions, as $n \to \infty$, the suitably centered and scaled Bayesian posterior distribution will be approximately Gaussian.  Our main result below states that a possibilistic version of this claim holds true for the IM solution reviewed in Section~\ref{SS:im} above.


\subsection{Regularity conditions}
\label{SS:regularity}

Certain regularity conditions are required in order to establish the desired asymptotic properties, but nothing more is required here than is commonly assumed in establishing the asymptotic normality and efficiency of maximum likelihood estimators.  Intuitively, these conditions imply that the log-likelihood function be smooth enough that it can be well-approximated by a quadratic function.  One common set of regularity conditions are the classical {\em Cram\'er conditions} \citep{cramer.book}, versions of which can be found in the standard texts on statistical theory, including \citet[][Theorem~3.10]{lehmann.casella.1998} and \citet[][Theorem~7.63]{schervish1995}.  Here we will adopt a more modern approach originating in \citet{lecam1956, lecam1960, lecam1970} and \citet{hajek1972}; see, also, \citet{bickel1998} and the textbook treatments of \citet{vaart1998} and \citet{keener2010}. 

To set the scene, suppose that $\{\prob_\theta: \theta \in \TT\}$ is a statistical model, i.e., a collection of probability measures on $\XX$ indexed by a parameter space $\TT$, which we will assume is an open subset of the Euclidean space $\RR^D$, for $D \geq 1$.  We will also assume that, for each $\theta$, the probability measure $\prob_\theta$ admits a density/Radon--Nikodym derivative, $x \mapsto p_\theta(x)$, relative to the dominating $\sigma$-finite measure $\nu$ on $\XX$, which in applications would typically be counting or Lebesgue measure.  Following \citet[][Ch.~2]{bickel1998}, define the (natural) logarithm and square-root density functions:
\[ \ell_\theta(x) = \log p_\theta(x) \quad \text{and} \quad s_\theta(x) = p_\theta(x)^{1/2}. \]
The ``dot'' notation, e.g., $\dot g_\theta(x)$, will be used to represent a function that behaves like the derivative of $g_\theta(x)$ with respect to $\theta$ for fixed $x$.  If the usual partial derivative of $g_\theta(x)$ with respect to $\theta$ exists for each $x$, then $\dot g_\theta(x)$ is that derivative; but suitable functions $\dot g_\theta(x)$ may exist in certain cases even when the ordinary derivative fails to exist.  Finally, let $\sL_2(\nu)$ denote the set of measurable functions on $\XX$ that are square $\nu$-integrable. 

\begin{defn}
\label{def:reg.point}
Relative to $\{\prob_\theta: \theta \in \TT\}$, with density functions $p_\theta(x)$ and square-root density functions $s_\theta(x) = p_\theta(x)^{1/2}$, an interior point $\vartheta \in \TT$ is {\em regular} if:
\begin{itemize}
\item[(a)] there exists a vector $\dot s_\vartheta(x) = \{ \dot s_{\vartheta,d}(x): d=1,\ldots,D\}$ whose coordinates $\dot s_{\vartheta,d}$ are elements of $\sL_2(\nu)$ such that 
\begin{equation}
\label{eq:dqm}
\int \bigl| s_{\vartheta + u}(x) - s_\vartheta(x) - u^\top \dot s_\vartheta(x) \bigr|^2 \, \nu(dx) = o(\|u\|^2), \quad u \to 0 \in \RR^D, 
\end{equation}
\item[(b)] the $D \times D$ matrix $\int \dot s_\vartheta(x) \, \dot s_\vartheta(x)^\top \, \nu(dx)$ is non-singular. 
\end{itemize} 
\end{defn}

The property \eqref{eq:dqm} in Definition~\ref{def:reg.point}(a) is often described as $\theta \mapsto s_\theta$ being {\em differentiable in quadratic mean} at $\vartheta$.  The essential point is that this condition does not require the square-root density to actually be differentiable at $\vartheta$, only that it be locally ``linearizable'' in an average sense, like a differentiable-at-$\vartheta$ function would be.  The classical Cram\'er conditions, on the other hand, assume (much) more than twice continuous differentiability of the log-density.  So, the condition \eqref{eq:dqm}, which does not even require existence of a first derivative, is significantly weaker than what can be found in the classical textbooks; sufficient conditions for \eqref{eq:dqm} are given in \citet[][Lemma~7.6]{vaart1998} and discussed more below.  As the reader surely can guess, Definition~\ref{def:reg.point}(b) relates to a notion of Fisher information, and we will address this connection shortly.  

\begin{defn}
\label{def:reg.model}
The model parametrized as $\{\prob_\theta: \theta \in \TT\}$ is {\em regular} if:
\begin{itemize}
\item[(a)] $\TT$ is open and all the points $\theta \in \TT$ are regular, and 
\vspace{-2mm}
\item[(b)] the maps $\theta \mapsto \dot s_{\theta,d}$ from $\TT$ to $\sL_2(\nu)$ are continuous for each $d=1,\ldots,D$.
\end{itemize} 
\end{defn}

The role played by Condition~(b) in Definition~\ref{def:reg.model} is to facilitate the strengthening of some of the familiar ``pointwise-in-$\theta$'' asymptotic results to corresponding ``uniform-in-$\theta$'' results.  On the point of uniform convergence, say that a sequence of functions $\psi_n(z)$ converges to a function $\psi(z)$ {\em locally uniformly} if $\sup_{z \in C} |\psi_n(z) - \psi(z)| \to 0$ for every compact subset $C$ of the domain; in some contexts, what we are calling locally uniform convergence is called compact convergence. 

For a regular model $\{\prob_\theta: \theta \in \TT\}$, all $\theta$ are regular and, consequently, the $D \times D$ {\em Fisher information matrix} $I_\theta = 4 \int \dot s_\theta(x) \, \dot s_\theta(x)^\top \, \nu(dx)$ is non-singular for each $\theta \in \TT$.  Next, define the {\em score function} 
\begin{equation}
\label{eq:score} \dot\ell_\theta(x) = \frac{2 \dot s_\theta(x)}{s_\theta(x)} \, 1\{s_\theta(x) > 0 \} = \frac{\dot p_\theta(x)}{p_\theta(x)} \, 1\{p_\theta(x) > 0\}, 
\end{equation}
where $\dot p_\theta(x) := 2s_\theta(x) \dot s_\theta(x)$.  For any regular $\theta$ as in Definition~\ref{def:reg.point}, it can be shown that $\int \dot\ell_\theta(x) \, \prob_\theta(dx) = 0$, and so the Fisher information $I_\theta$ defined above is equivalent to the more familiar expression as the covariance matrix of the score function: 
\[ I_\theta = \int \dot\ell_\theta(x) \, \dot\ell_\theta(x)^\top \, \prob_\theta(dx), \quad \theta \in \TT. \]
Continuous differentiability of $\theta \mapsto p_\theta(x)$ for $\nu$-almost all $x$ and a little more is sufficient for regularity.  In particular, according to \citet[][Prop.~2.1.1]{bickel1998}, the model is regular if $\TT$ is open and the following conditions hold:
\begin{itemize}
\item $\theta \mapsto p_\theta(x)$ is continuously differentiable on $\TT$ for almost all $x$ with gradient $\dot p_\theta$, 
\vspace{-2mm}
\item the score function \eqref{eq:score} satisfies $\| \dot\ell_\theta \| \in \sL_2(\prob_\theta)$ for each $\theta$, and 
\vspace{-2mm}
\item $\theta \mapsto I_\theta$ is non-singular and continuous on $\TT$. 
\end{itemize} 
The above conditions are not necessary, however.  For example, a double-exponential location model does not satisfy the above continuous differentiability condition, but it is still regular; see \citet[][Ex.~7.6]{vaart1998} or \citet[][Ex.~2]{bickel1998}.  


Finally, we list here for reference below one relevant consequence of regularity.  Define the following sequence of random variables, indexed by $\TT$, 
\begin{equation}
\label{eq:delta}
\Delta_\theta(X^n) = n^{-1/2} I_\theta^{-1} \sum_{i=1}^n \dot\ell_\theta(X_i), \quad \theta \in \TT, 
\end{equation}
where $\dot\ell_\theta$ and $I_\theta$ are the score function and Fisher information defined above, respectively.  If $X^n$ are iid $\prob_\theta$, then this is a sum of iid random variables, with mean vector 0 and covariance matrix $I_\theta^{-1}$, scaled by $n^{-1/2}$, so it follows from the central limit theorem that $\Delta_\theta(X^n) \to \nm_D(0, I_\theta^{-1})$ in distribution, as $n \to \infty$.  More than that, by Proposition~2.1.2 in \citet{bickel1998}, if $X^n$ are iid $\prob_\theta$, then 
\begin{equation}
\label{eq:score.clt.unif}
\Delta_\theta(X^n) \to \nm_D(0, I_\theta^{-1}) \quad \text{in distribution, locally uniformly in $\theta$}.
\end{equation}

\subsection{Main result: possibilistic Bernstein--von Mises}
\label{SS:main}

Under the above regularity conditions, we can now establish a possibilistic Bernstein--von Mises theorem for the IM solution presented in Section~\ref{SS:im}.  Roughly speaking, the result states that a centered and scaled version of the IM's possibility contour can, when the sample size $n$ is large, be accurately approximated by the Gaussian possibility contour.  More specifically, if $\gamma_{\mu, \Sigma}$ is the Gaussian possibility contour in Section~\ref{SS:possibility}, define the following short-hand notation:
\begin{equation}
\label{eq:bvm.limit}
\gamma_{X^n}(\theta) = \gamma_{\Theta + n^{-1/2} \Delta_\Theta(X^n), (n I_\Theta)^{-1}}(\theta), \quad \theta \in \TT, 
\end{equation}
where $\Delta_\Theta(X^n)$ is the scaled score function \eqref{eq:delta} and $I_\Theta$ is the Fisher information matrix.  This is just a Gaussian possibility contour with a mean that is close to $\Theta$ (specifically, the mean is asymptotically equivalent to $\hat\theta_{X^n}$) and covariance matrix equal to the Cram\'er--Rao lower bound.  Then our basic claim, in Theorem~\ref{thm:bvm} below, is that the possibilistic IM contour $\pi_{X^n}$ will asymptotically merge with the Gaussian contour $\gamma_{X^n}$ in a strong, uniform sense.  Different versions of the result, which the reader might find more intuitive, will be presented following the statement of the theorem and in Section~\ref{SS:remarks}.  

\begin{thm}
\label{thm:bvm}
Let $X^n = (X_1,\ldots,X_n)$ consist of iid observations from $\prob_\Theta$, where $\{\prob_\theta: \theta \in \TT\}$ is the posited model and $\Theta \in \TT$.  Suppose that the model is regular in the sense of Definition~\ref{def:reg.model}, and that there exists a function $m: \XX \to \RR$ such that 
\begin{itemize}
\item $\int m^2 \, d\prob_\theta < \infty$ for all $\theta \in \TT$, and 
\vspace{-2mm}
\item for each pair $(\theta,\vartheta)$ and each $x$, $|\ell_\theta(x) - \ell_\vartheta(x) | \leq m(x) \, \| \theta - \vartheta \|$.
\end{itemize}
In addition, suppose that the maximum likelihood estimator is consistent.  Then, for any fixed-but-arbitrary compact subset $\K \subset \TT$, 
\begin{equation}
\label{eq:bvm}
\sup_{\theta \in \K} \bigl| \pi_{X^n}(\theta) - \gamma_{X^n}(\theta) \bigr| \to 0 \quad \text{in $\prob_\Theta$-probability as $n \to \infty$}, 
\end{equation}
where $\gamma_{X^n}$ is the Gaussian possibility contour in \eqref{eq:bvm.limit}.  
\end{thm}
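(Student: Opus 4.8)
The plan is to recognize both the IM contour and the target Gaussian contour as one minus a chi-square-type distribution function evaluated at a likelihood-ratio statistic, and then to show that the two statistics and the two distribution functions merge. Write $W_n(\theta; y^n) = -2\log R(y^n, \theta)$ for the likelihood-ratio statistic as a function of a generic data vector $y^n$, and let $G_{n,\theta}$ be its distribution function when the data are iid $\prob_\theta$. Since $t \mapsto -2\log t$ is decreasing, the IM contour \eqref{eq:contour} evaluated at the observed $X^n \iid \prob_\Theta$ becomes
\[ \pi_{X^n}(\theta) = \prob_\theta\{ W_n(\theta; Y^n) \geq W_n(\theta; X^n) \} = 1 - G_{n,\theta}\bigl( W_n(\theta; X^n)^- \bigr), \]
where $Y^n \iid \prob_\theta$ is a fresh sample independent of $X^n$. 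On the other side, the form of $\gamma_{\mu, \Sigma}$ from Section~\ref{SS:possibility}, with $\mu = \Theta + n^{-1/2}\Delta_\Theta(X^n)$ and $\Sigma^{-1} = nI_\Theta$, gives $\gamma_{X^n}(\theta) = 1 - F_D\{ n(\theta - \mu)^\top I_\Theta (\theta - \mu) \}$. Thus \eqref{eq:bvm} reduces to two merging statements, each uniform over $\K$: (i) the inner law $G_{n,\theta}$ merges with $F_D$, and (ii) the observed statistic $W_n(\theta; X^n)$ merges with the quadratic form $n(\theta-\mu)^\top I_\Theta(\theta-\mu)$.

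First I would dispatch the local regime $\{\theta : \|\theta - \Theta\| \leq M n^{-1/2}\}$ for fixed large $M$, writing $\theta = \Theta + n^{-1/2} h$ with $\|h\| \leq M$. Differentiability in quadratic mean (Definition~\ref{def:reg.point}(a)) yields the local asymptotic normality expansion $\log\{ L_{X^n}(\Theta + n^{-1/2}h)/L_{X^n}(\Theta)\} = h^\top n^{-1/2}\sum_i \dot\ell_\Theta(X_i) - \tfrac12 h^\top I_\Theta h + o_{\prob_\Theta}(1)$, locally uniformly in $h$ \citep[][Theorem~7.2]{vaart1998}. Together with consistency of the maximum likelihood estimator this produces $n^{1/2}(\hat\theta_{X^n} - \Theta) = \Delta_\Theta(X^n) + o_{\prob_\Theta}(1)$ and the quadratic approximation $W_n(\Theta + n^{-1/2}h; X^n) = (h - \Delta_\Theta(X^n))^\top I_\Theta (h - \Delta_\Theta(X^n)) + o_{\prob_\Theta}(1)$, uniformly over $\|h\| \leq M$, which is exactly $n(\theta-\mu)^\top I_\Theta(\theta-\mu)$ up to a negligible error---settling (ii) locally. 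For (i), running the same expansion with $\theta$ itself as the sampling parameter and invoking the locally uniform central limit theorem \eqref{eq:score.clt.unif} and the continuity in Definition~\ref{def:reg.model}(b) upgrades Wilks' theorem to $G_{n,\theta} \to F_D$ locally uniformly in $\theta$; uniformity in the argument is then automatic because $F_D$ is continuous (Polya's theorem). Composing with the uniform continuity of $F_D$ closes the local regime.

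It remains to show that on $\K \setminus \{\|\theta - \Theta\| \leq M n^{-1/2}\}$ both contours are uniformly negligible once $M$ is large. For the macroscopic part $\{\theta \in \K : \|\theta - \Theta\| \geq \delta\}$, the envelope hypothesis $|\ell_\theta - \ell_\vartheta| \leq m \, \|\theta - \vartheta\|$ with $m \in \sL_2(\prob_\theta)$ furnishes a uniform law of large numbers for $\theta \mapsto n^{-1}\sum_i \ell_\theta(X_i)$, so that, using MLE consistency, $n^{-1} W_n(\theta; X^n) \to 2\,\mathrm{KL}(\prob_\Theta \,\|\, \prob_\theta)$ uniformly, a limit that is strictly positive by identifiability. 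Hence the observed statistic diverges linearly in $n$ while the inner law $G_{n,\theta}$ stays tight, forcing $\pi_{X^n}(\theta) \to 0$ uniformly; simultaneously $\gamma_{X^n}(\theta) \to 0$ since its quadratic form grows like $n$. The intermediate shell $\{M n^{-1/2} \leq \|\theta - \Theta\| \leq \delta\}$ is absorbed by letting $M \to \infty$: there the quadratic form in $\gamma_{X^n}$ exceeds any fixed chi-square quantile, and the same envelope-based control shows the observed relative likelihood is uniformly small, so $\pi_{X^n}$ is small too. Peeling $\K$ into these pieces and combining with the local estimate yields the uniform bound \eqref{eq:bvm}.

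I expect the main obstacle to be the uniformity bookkeeping, in particular the double role of $\theta$: it is simultaneously the point at which the contour is evaluated and the sampling parameter governing the inner distribution $G_{n,\theta}$. Making the LAN expansion and Wilks' limit hold locally uniformly in this second $\theta$---so that the supremum over $\K$ and the limit in $n$ may be interchanged---is the delicate step, and it is precisely here that Definition~\ref{def:reg.model}(b) and the locally uniform central limit theorem \eqref{eq:score.clt.unif} are indispensable. The secondary difficulty is controlling the intermediate shell, where neither the fixed-$\|h\|$ local expansion nor the macroscopic law of large numbers applies cleanly, and one must quantify the decay of the relative likelihood at the $n^{-1/2}$ scale.
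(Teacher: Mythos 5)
Your proposal is correct and follows essentially the same route as the paper's proof: both rewrite $\pi_{X^n}(\theta)$ as the sampling distribution function of the relative likelihood evaluated at the observed statistic, split the error into (i) uniform convergence of that distribution function to the $\exp\{-\tfrac12\chisq(D)\}$ law (the paper's Lemma~\ref{lem:step1}, via LAN and a locally uniform Wilks theorem) and (ii) merging of the observed statistic with the quadratic form $\{z-\Delta_\Theta(X^n)\}^\top I_\Theta\{z-\Delta_\Theta(X^n)\}$ (the paper's Lemma~\ref{lem:step2}), and both use the Lipschitz envelope condition to drive the contour and its Gaussian approximation to zero away from the $n^{-1/2}$-neighborhood of $\Theta$. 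Your three-zone peeling of $\K$ versus the paper's treatment via sequences $z_n$ that are bounded or diverge no faster than $n^{1/2}$ is only a cosmetic difference.
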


The proof of Theorem~\ref{thm:bvm} is presented in Appendix~\ref{SS:bvn.proof}.  Before that, however, two technical points are made below.  Then the next two subsections offer, first, more context/intuition and, second, a few numerical illustrations of the main result.   


As promised, there are equivalent ways to represent the asymptotic approximation of the possibilistic IM's contour.  One such alternative happens to be the most convenient in certain aspects of the proof in Appendix~\ref{SS:bvn.proof} below.  Define 
\begin{equation}
\label{eq:im.scaled}
\check\pi_{X^n}(z) = \pi_{X^n}( \Theta + n^{-1/2} \, z ), \quad z \in \RR^D, 
\end{equation}
to be a suitably centered and scaled version of the IM contour.  Then Theorem~\ref{thm:bvm} implies, among other things, that
\[ \bigl| \check \pi_{X^n}(z) - \gamma_{\Delta_\Theta(X^n), I_\Theta^{-1}}(z) \bigr| \to 0 \quad \text{in $\prob_\Theta$-probability, locally uniformly in $z$}, \]
where $\gamma_{\mu, \Sigma}$ is the Gaussian possibility contour in Section~\ref{SS:possibility}, and $\Delta_\Theta(X^n)$ is the scaled score function in \eqref{eq:delta}.  This version still might not be the most intuitively pleasing or easiest to understand/interpret, but there are a couple more equivalent versions.  With a slight abuse of the previous notation, let $\ell_{X^n}(\theta) = \sum_{i=1}^n \ell_\theta(X_i)$ denote the log-likelihood function.  Then suppose that $\theta \mapsto \ell_{X^n}(\theta)$ is twice differentiable for each $X^n$; recall that Theorem~\ref{thm:bvm}'s assumption of differentiability in quadratic mean does not require that the log-likelihood be differentiable even once.  Then the maximum likelihood estimator is characterized as a solution to the likelihood equation, i.e., $\dot\ell_{X^n}(\hat\theta_{X^n}) = 0$, and the following second order condition holds: $J_{X^n} := -\ddot\ell_{X^n}(\hat\theta_{X^n})$ is non-negative definite.  Twice differentiability of the log-likelihood (and more) is entailed by the Cram\'er conditions. Under these stronger conditions, the conclusion \eqref{eq:bvm} in Theorem~\ref{thm:bvm} can be replaced by 
\begin{equation}
\label{eq:cr.bound}
\sup_{z \in K} \bigl| \pi_{X^n}\{ \hat\theta_{X^n} + (nI_\Theta)^{-1/2} \, z\} - \gamma(z) \bigr| \to 0 \quad \text{in $\prob_\Theta$-probability}, 
\end{equation}
where $\gamma$ is the standard Gaussian contour, or 
\[ \sup_{z \in \K} \bigl| \pi_{X^n}( \hat\theta_{X^n} + J_{X^n}^{-1/2} \, z) - \gamma(z) \bigr| \to 0 \quad \text{in $\prob_\Theta$-probability}. \]
The result in the second display above is the special case of Theorem~\ref{thm:bvm} presented in the conference version of this paper \citep{imbvm}.  These two are equivalent to the centering/scaling in Theorem~\ref{thm:bvm} because the scaled score function $\Delta_\Theta(X^n)$ in \eqref{eq:delta} and $n^{1/2}(\hat\theta_{X^n} - \Theta)$ are asymptotically equivalent 
and, similarly, the matrices $(nI_\Theta)$ and $J_{X^n}$ are asymptotically equivalent.  An even simpler, albeit less formal, restatement of Theorem~\ref{thm:bvm} is that, when $n$ is large, 
\begin{equation}
\label{eq:cr.bound.easy}
\pi_{X^n}(\theta) \approx \gamma_{\hat\theta_{X^n}, J_{X^n}^{-1}}(\theta), \quad \text{for all $\theta$ near $\Theta$}, 
\end{equation}
Among other things, this reveals that the IM's confidence set for $\Theta$ satisfies 
\begin{align*}
\{ \theta: \pi_{x^n}(\theta) > \alpha\} & \approx \{ \theta: \gamma_{\hat\theta_{X^n}, J_{X^n}^{-1}}(\theta) > \alpha \} \\
& = \{ \theta: 1 - F_D\bigl( (\theta - \hat\theta_{X^n})^\top J_{X^n} \, (\theta - \hat\theta_{X^n}) \bigr) > \alpha \} \\
& = \{ \theta: (\theta - \hat\theta_{X^n})^\top J_{X^n} \, (\theta - \hat\theta_{X^n}) < F_D^{-1}(1-\alpha) \},
\end{align*}
where $F_D$ is the $\chisq(D)$ distribution function, which is exactly the textbook large-sample, likelihood-based, elliptical confidence set.

\subsection{Take-home message}
\label{SS:take.home}

As stated in Section~\ref{S:intro}, the most important message concerns the (asymptotic) efficiency of the possibilistic IM solution.  This is easiest to see from \eqref{eq:cr.bound.easy}: the possibilistic IM contour asymptotically resembles the probability-to-possibility transform of a Gaussian with mean vector $\hat\theta_{X^n}$ and covariance matrix $(nI_\Theta)^{-1}$, where the latter agrees with the classical Cram\'er--Rao lower bound on the variance of unbiased estimators of $\Theta$.  Since having an asymptotic variance that agrees with the Cram\'er--Rao lower bound is the classical definition of (asymptotic) {\em efficiency}, it is fair to interpret Theorem~\ref{thm:bvm} as saying that the possibilistic IM is asymptotically efficient.  While asymptotic efficiency is desirable, it certainly is not unique to the possibilistic IM: Bayesian posterior distributions and (generalized) fiducial distributions achieve a comparable version of asymptotic efficiency.  The distinguishing feature of the possibilistic IM is that its strategic introduction of imprecision (of a possibilistic form) {\em ensures validity for all $n$}. So the take-home message is that the IM's inherent imprecision, which is what allows for finite-sample validity, remarkably does not lead to any asymptotic inefficiencies: {\em like Bayes, fiducial, etc., possibilistic IMs are asymptotically efficient; but unlike Bayes, fiducial, etc., possibilistic IMs are valid in finite samples.}  This is explained in a bit more detail in Remark~\ref{re:credal} below.

\subsection{Numerical illustrations}


\begin{ex} 
Let $X^n=(X_1,\ldots,X_n)$ consists of iid $\ber(\Theta)$ random variables, with $\Theta \in \TT = [0,1]$ uncertain.  The relative likelihood, which depends on data only through the sum $S=\sum_{i=1}^n X_i$, is 
\[ R(X^n,\theta) \equiv R(S,\theta) = \Bigl( \frac{n\theta}{S} \Bigr)^S \Bigl( \frac{n-n\theta}{n-S} \Bigr)^{n-S}, \quad \theta \in [0,1], \]
with the two boundary cases settled as $R(0,\theta) = (1-\theta)^n$ and $R(n,\theta) = \theta^n$.  There is no closed-form expression for the relative-likelihood-based IM possibility contour, but it is easy to evaluate numerically.  Indeed, if $\bar x$ is the sample proportion and $g_\theta(s)$ is the $\bin(n,\theta)$ probability mass function, then 
\[ \pi_{x^n}(\theta) = \sum_{s=0}^n 1\{ R(s,\theta) \leq R(n\bar x,\theta)\} \, g_\theta(s), \quad \theta \in [0,1], \]
with $1(\cdot)$ the indicator. The Gaussian approximation, $\theta \mapsto \gamma\{ J_{x^n}^{1/2}(\theta - \hat\theta_{x^n})\}$, suggested in Section~\ref{SS:main} specializes in this Bernoulli case to 
\[
\theta \mapsto \gamma\bigg[ \frac{n^{1/2}(\theta - \bar{x})}{\{\bar{x}(1-\bar{x})\}^{1/2}} \bigg], \quad \theta \in [0,1]. 
\]
Figure~\ref{fig:bernoulli}(a) shows the exact (standardized) possibilistic IM contour for $\Theta$, at four different sample sizes, along with the limiting standard Gaussian contour. As Theorem~\ref{thm:bvm} suggests, the approximation improves as $n$ increases.
\end{ex}

\begin{ex}
Let $X^n=(X_1,\ldots,X_n)$ consist of iid ${\sf Beta}(\Theta,1)$ random variables, where $\Theta > 0$ is unknown.  Recall that the ${\sf Beta}(\theta,1)$ has {\sc pdf} and {\sc cdf} $x \mapsto \theta x^{\theta-1}$ and $x \mapsto x^\theta$, respectively, for $x \in (0,1)$.  It can be shown that the maximum likelihood estimator is $\hat\theta_{x^n} = (-\frac1n \sum_{i=1}^n \log x_i)^{-1}$ and that the relative likelihood function is $R(x^n,\theta) = g(x^n, \theta)^n \exp\{-g(x^n,\theta)\}$, with 
\[ g(x^n,\theta) = -\textstyle\sum_{i=1}^n \log x_i^\theta. \]
The above {\sc cdf} formula implies that $X_i^\Theta \sim \unif(0,1)$, which, in turn, implies that $g(X^n,\Theta) \sim {\sf Gamma}(n,1)$.  Consequently, the exact IM contour is
\[ \pi_{x^n}(\theta) = \prob\{ G^n e^{-G} \leq R(x^n,\theta)\}, \quad \text{with} \quad G \sim {\sf Gamma}(n,1). \]
There is no closed-form expression for this, but it can easily be evaluated via Monte Carlo simulation.  Since the observed Fisher information is $J_{x^n} = n/\hat\theta_{x^n}^2$, we can now readily evaluate the standardized IM contour $z \mapsto \check\pi_{x^n}(z)$ and compare to the Gaussian approximation.  Figure~\ref{fig:bernoulli}(b) shows the contours for four different values of $n$, and the Gaussian approximation improves as $n$ increases. 
\end{ex}

\begin{figure}[t]
\begin{center}
\subfigure[Example~1: Bernoulli]{\scalebox{0.55}{\includegraphics{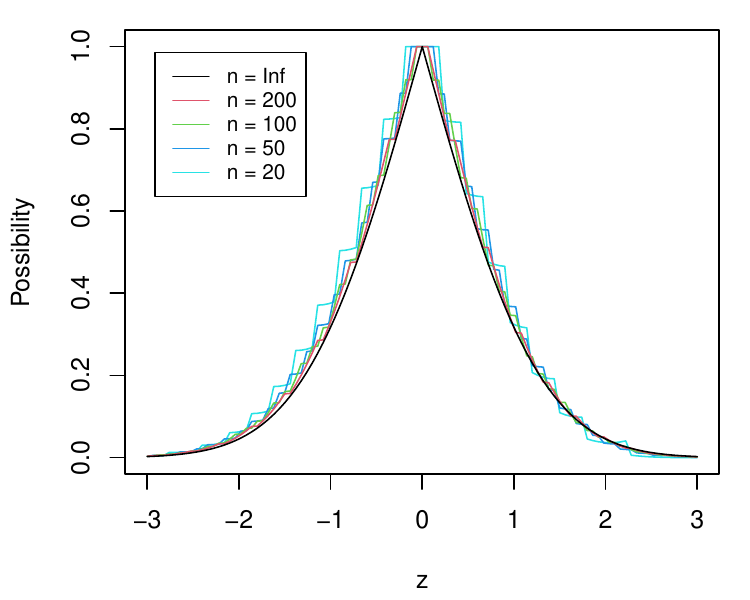}}}
\subfigure[Example~2: Beta]{\scalebox{0.55}{\includegraphics{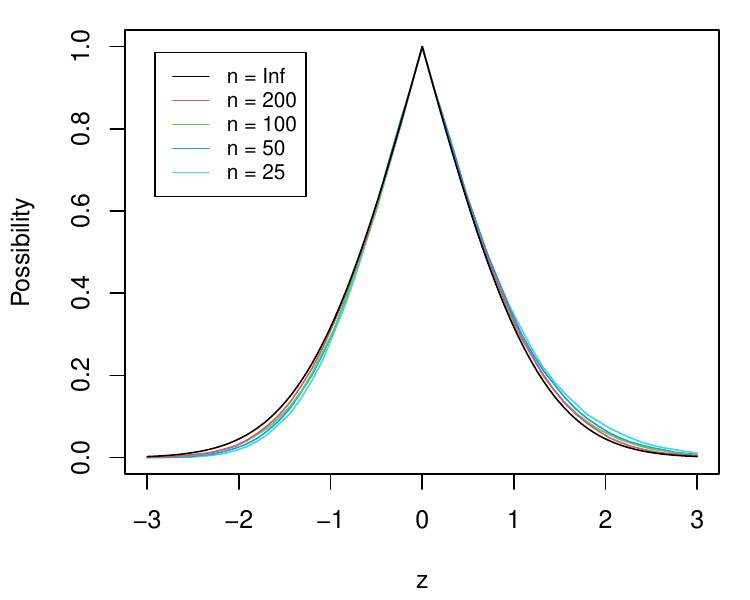}}}
\end{center}
\vspace{-4mm}
\caption{Possibility contours for the two illustrative examples.}
\label{fig:bernoulli}
\end{figure}

\begin{ex}
\label{ex:tri}
An interesting and classical non-standard example is the so-called asymmetric triangular distribution \citep[e.g.,][Example~11]{bergerbernardosun2009}, with density function
\[ p_\theta(x) = \begin{cases} 2x/\theta  & \text{if $0 \leq x \leq \theta$}, \\ 2(1-x) / (1-\theta)  & \text{if $\theta < x \leq 1$}, \end{cases} \]
where $\theta \in [0,1]$.  Suppose that data $X^n$ is iid from the triangular model with unknown mode $\Theta \in \TT = [0,1]$.  While not a common model in applications, it has a long history, going all the way back to \citet{simpson1755}; see, also, \citet{schmidt1934}, \citet{johnson.kotz.1999}, and, more recently, \citet{gim} and \citet{borgert.hannig.bvm}.  What makes this model interesting and challenging is, among other things, that, while the density is continuous and has a unique mode at $\theta$, it is not differentiable there.  This creates difficulties for the classical statistical theory that requires existence of at least two derivatives of the log-likelihood function.  But smoothness is not required to construct a possibilistic IM; in fact, as discussed in \citet{gim}, it can be done relatively easily with brute force Monte Carlo.  Figure~\ref{fig:tri} shows the (Monte Carlo-driven) exact IM contours for two data sets of different sizes, namely, $n=250$ and $n=500$, along with the large-sample Gaussian approximations.  For the Gaussian approximation, we use the formula for the Fisher information, $I_\theta = \{\theta(1-\theta)\}^{-1}$, derived in \citet{borgert.hannig.bvm}.  This model is rather complex, nearly non-regular, so there are clear signs of non-Gaussianity even at $n=250$, but these are mostly gone at $n=500$.  
\end{ex}

\begin{figure}[t]
\begin{center}
\subfigure[$n=250$]{\scalebox{0.55}{\includegraphics{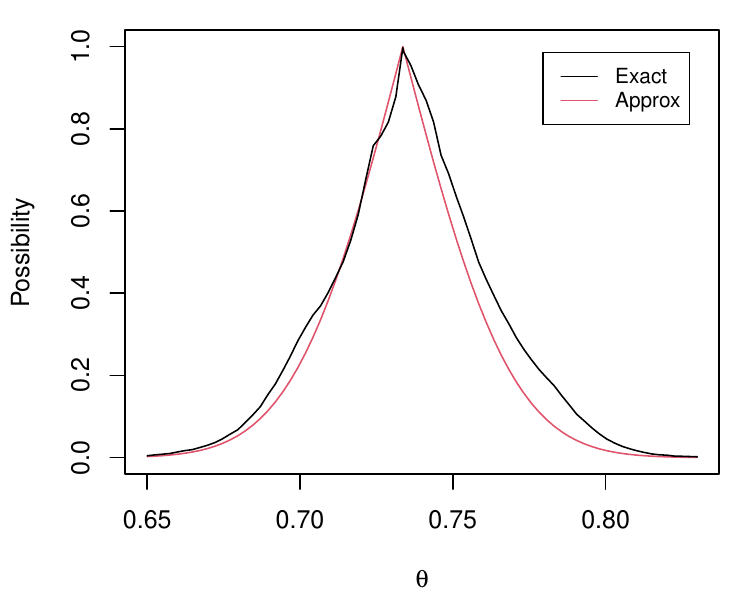}}}
\subfigure[$n=500$]{\scalebox{0.55}{\includegraphics{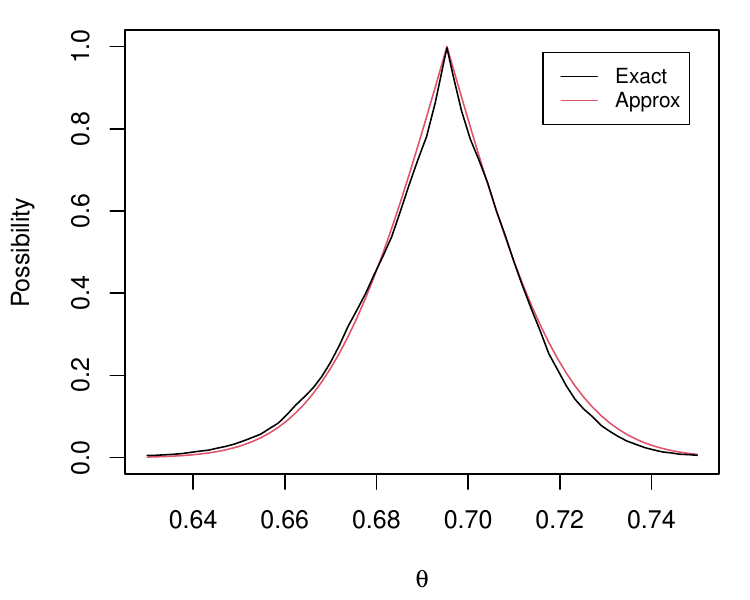}}}
\end{center}
\caption{Plot of the exact and approximate IM contour for the mode $\Theta$ in the triangular model as described in Example~\ref{ex:reg.variance}, for two different sample sizes when $\Theta=0.7$.}
\label{fig:tri}
\end{figure}

\begin{ex}
\label{ex:logistic}
Data presented in Table 8.4 of \citet[][p.~252]{ghosh-etal-book} concern the relationship between exposure to chloracetic acid and the death of mice. A total of $n = 120$ mice were exposed, ten at each of the twelve dose levels (denoted by $x$), and a binary death indicator is measured (denoted by $y$).  Figure~\ref{fig:logistic}(a) shows a plot of the data, with jitter in $x$ to show the replications, along with a simple logistic regression model fit.  Let $\Theta=(\Theta_0, \Theta_1)$ denote the uncertain value of the logistic regression model parameter, the ``intercept'' and ``slope'' pair, respectively.  

Concerning inference on $\Theta$ itself, computation of the ``exact'' IM contour requires lots of Monte Carlo evaluations and, hence, is very expensive.  The Gaussian approximation, however, is incredibly easy to evaluate, and Figure~\ref{fig:logistic}(b) shows, as Theorem~\ref{thm:bvm} predicts, that the approximation is quite accurate.  
\end{ex} 

\begin{figure}[t]
\begin{center}
\subfigure[Scatterplot, model fit, and $\hat\phi_n=0.244$]{\scalebox{0.55}{\includegraphics{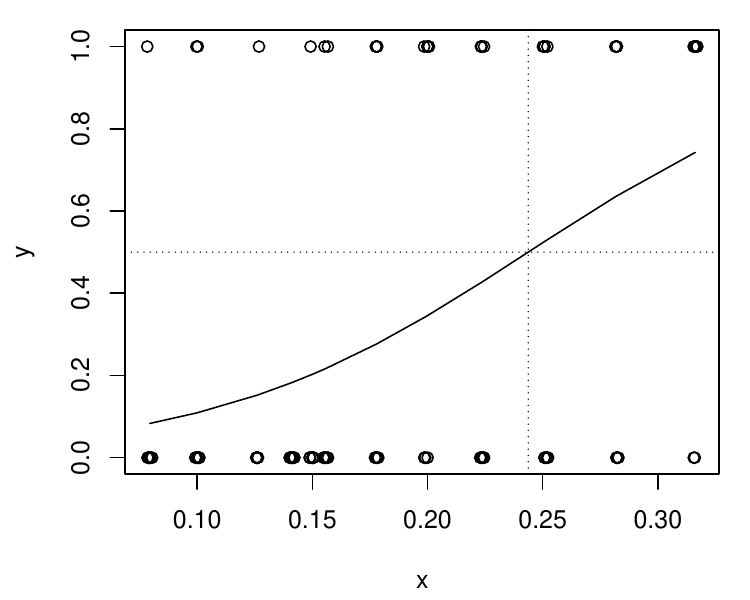}}}
\subfigure[Joint contour for $\Theta=(\Theta_0,\Theta_1)$]{\scalebox{0.55}{\includegraphics{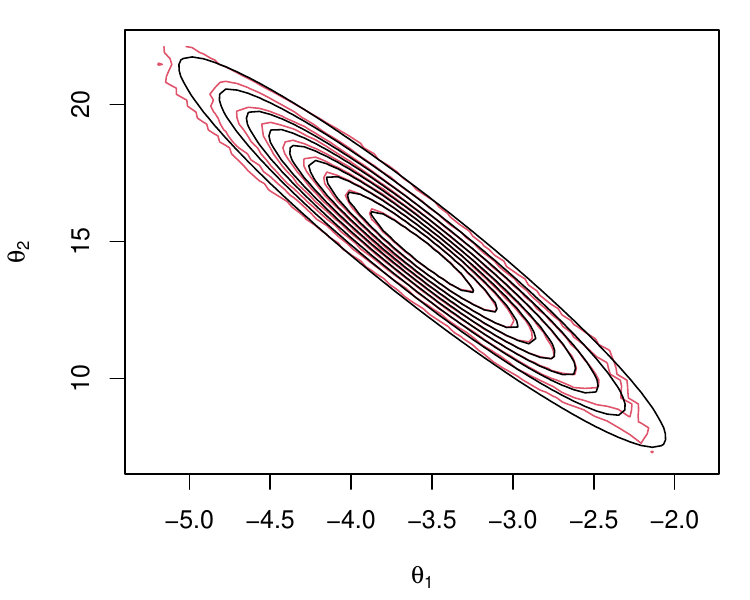}}}
\end{center}
\vspace{-4mm}
\caption{Results for logistic regression in Example~\ref{ex:logistic}. Panel~(a) shows the data (with jitter in the $x$ values for visualization) and the best-fit logistic regression line; the dotted lines will be explained in Example~\ref{ex:logistic.again}. In Panel~(b), the (rough Monte Carlo approximation of the) exact IM contour is red and the Gaussian approximation is black.}
\label{fig:logistic}
\end{figure}

\subsection{Implications for uncertainty quantification} 

The uniformity in Theorem~\ref{thm:bvm} is critical because the contour function itself is only the basic building block of the IM.  Our primary goal is reliable uncertainty quantification---safe from false confidence, etc.---which involves the corresponding necessity and possibility measures, $(\lPi_{X^n}, \uPi_{X^n})$.  So, it is important to explain the implications of Theorem~\ref{thm:bvm} as it pertains to the possibility measure $\uPi_{X^n}$ derived from the contour $\pi_{X^n}$.  The point is that uniform control on the contour in \eqref{eq:bvm} is exactly what is needed to control the possibility measure determined by optimization of the contour.  We proceed informally since the formal details are likely to over-complicate these relatively simple points.  

To start, if $f$ and $g$ are two generic, real-valued functions defined on the same domain, and if $A$ is a subset of that common domain, then it is easy to show that  
\begin{equation}
\label{eq:sups}
\Bigl| \sup_A f - \sup_A g \Bigr| \leq \sup_A |f-g|. 
\end{equation}
Then \eqref{eq:sups} implies that, for any hypothesis sequence $H_n \subseteq \TT$, 
\begin{align*}
\bigl| \uPi_{X^n}(H_n) - \uGamma_{X^n}(H_n) \bigr| & = \Bigl| \sup_{\theta \in H_n} \pi_{X^n}(\theta) - \sup_{\theta \in H_{n}} \gamma_{X^n}(\theta) \Bigr| \\
& \leq \sup_{\theta \in H_n} \bigl| \pi_{X^n}(\theta) - \gamma_{X^n}(\theta) \bigr|. 
\end{align*}
Let $\tilde\theta_{X^n}(H_n)$ be a point at which the supremum in the upper bound above is attained, which may or may not be in $H_n$.  If $H_n$ is such that $\tilde\theta_{X^n}(H_n)$ is bounded, then it follows from \eqref{eq:bvm} that 
\begin{equation}
\label{eq:bvm.H}
\bigl| \uPi_{X^n}(H_n) - \uGamma_{X^n}(H_n) \bigr| \to 0, \quad \text{in $\prob_\Theta$-probability as $n \to \infty$}.
\end{equation}
The potential for over-complication is created if we try to offer an overarching sufficient condition for ``$H_n$ is such that $\tilde\theta_{X^n}(H_n)$ is bounded.''  This is immediately clear for any static $H$ with $\Theta$ in the interior of $H$ or in the interior of $H^c$: since $\hat\theta_{X^n}$ and $\Theta + n^{-1/2} \Delta_\Theta(X^n)$ are asymptotically equivalent and, hence, both are consistent by assumption, the global modes of $\pi_{X^n}$ and $\gamma_{X^n}$ are eventually in whichever of $H$ or $H^c$ contains $\Theta$.  For static $H$ with $\Theta \in H$ but on the boundary, the respective modes will forever bounce in and out of $H$, but they are both collapsing to $\Theta$ so both $\uPi_{X^n}(H)$ and $\uGamma_{X^n}(H)$ will converge to 1.  The property \eqref{eq:bvm.H} also holds for genuine sequences of hypotheses.  It clearly holds for $H_n$ such that $\Theta$ is eventually contained in the interior of $H_n$ or of $H_n^c$, for the reasons discussed above.  It also holds for sequences $H_n$ such that $\tilde\theta_{X^n}(H_n)$ converges in $\prob_\Theta$-probability to $\Theta$: since one can view $\tilde\theta_{X^n}(H_n)$ as something like a ``projection'' of $\Theta$ onto $H_n$, we see that this holds if, roughly, $H_n$ contains points that are converging to $\Theta$.  Finally, although this is a bit unusual, the property \eqref{eq:bvm.H} also holds for certain data-dependent sequences $H_n \equiv H_{X^n}$.  We mentioned above---see the discussion following Equation~\eqref{eq:valid.uniform}---that data-dependent hypotheses can be relevant when investigators are probing for hypotheses supported by the data.  As an extreme example, consider $H_{X^n} = \{\hat\theta_{X^n}\}$.  This is not strictly a true hypothesis, but it is ``effectively, asymptotically true,'' so we expect the data to be compatible with it and, indeed, $\uPi_{X^n}(H_{X^n}) \equiv 1$.  But, again, since the mode $\Theta + n^{-1/2} \Delta_\Theta(X^n)$ of $\gamma_{X^n}$ is asymptotically equivalent to $\hat\theta_{X^n}$, we find that \eqref{eq:bvm.H} also holds for this (extreme) data-dependent $H_{X^n}$, as expected. 

One point in the above discussion that deserves further emphasis is summarized in the following corollary.  

\begin{cor}
\label{cor:consistency}
Let $H$ be a fixed hypothesis with $\Theta$ in its interior.  Under the conditions of Theorem~\ref{thm:bvm}, $\uPi_{X^n}(H) \to 1$ and $\uPi_{X^n}(H^c) \to 0$ in $\prob_\Theta$-probability. 
\end{cor}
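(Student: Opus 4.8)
The plan is to deduce both limits from the already-established relation \eqref{eq:bvm.H} together with two elementary computations for the Gaussian possibility measure. The key common observation is that the mode $\mu_n := \Theta + n^{-1/2}\Delta_\Theta(X^n)$ of the contour $\gamma_{X^n}$ in \eqref{eq:bvm.limit} converges to $\Theta$ in $\prob_\Theta$-probability, since $\Delta_\Theta(X^n)$ is $O_{\prob_\Theta}(1)$ by \eqref{eq:score.clt.unif}; and, by hypothesis, $\Theta$ lies in the interior of $H$. As noted in the discussion preceding the corollary, \eqref{eq:bvm.H} applies to any static hypothesis having $\Theta$ in the interior of itself or of its complement, which covers both $H$ and $H^c$ here. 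Thus it suffices to prove $\uGamma_{X^n}(H) \to 1$ and $\uGamma_{X^n}(H^c) \to 0$ in $\prob_\Theta$-probability.

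Both Gaussian limits are short. For $H$, once $\mu_n \in H$---an event of probability tending to one---we have $\uGamma_{X^n}(H) \geq \gamma_{X^n}(\mu_n) = 1 - F_D(0) = 1$, so $\uGamma_{X^n}(H) \to 1$; equivalently, and even more directly, consistency of $\hat\theta_{X^n}$ and the identity $\pi_{X^n}(\hat\theta_{X^n}) = 1$ give $\uPi_{X^n}(H) = 1$ on $\{\hat\theta_{X^n} \in H\}$, bypassing \eqref{eq:bvm.H} entirely for this half. For $H^c$, I would fix $\delta > 0$ with the closed $\delta$-ball about $\Theta$ contained in $H$, so that $H^c \subseteq \{\theta : \|\theta - \Theta\| \geq \delta\}$. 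On the event $\{\|\mu_n - \Theta\| < \delta/2\}$, every $\theta \in H^c$ satisfies $\|\theta - \mu_n\| \geq \delta/2$, whence the quadratic form in $\gamma_{X^n}$ obeys $(\theta - \mu_n)^\top (n I_\Theta)(\theta - \mu_n) \geq n\, \lambda_{\min}(I_\Theta)(\delta/2)^2 \to \infty$ uniformly over $H^c$; since $\gamma_{X^n}(\theta) = 1 - F_D\{(\theta-\mu_n)^\top (nI_\Theta)(\theta - \mu_n)\}$, this forces $\uGamma_{X^n}(H^c) \to 0$.

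The one genuinely substantive point is the content of \eqref{eq:bvm.H} for the unbounded set $H^c$, namely that the supremum of $\pi_{X^n}$ over $H^c$ is attained, asymptotically, at a bounded point---that the IM's plausibility does not leak out at infinity. The compact-set uniformity \eqref{eq:bvm} of Theorem~\ref{thm:bvm} does not by itself reach the tail, so this is where the work lies. I would close the gap by splitting $H^c$ at a large radius $M$: on the compact annulus $\{\delta \leq \|\theta - \Theta\| \leq M\}$ the triangle inequality $\sup \pi_{X^n} \leq \sup|\pi_{X^n} - \gamma_{X^n}| + \sup \gamma_{X^n}$ gives control via \eqref{eq:bvm} and the blow-up just described, while on the tail $\{\|\theta - \Theta\| > M\}$ I would invoke identifiability and a uniform law of large numbers---justified by the Lipschitz envelope $m$ assumed in Theorem~\ref{thm:bvm}---to make the relative likelihood $R(x^n,\theta) = \exp\{\ell_{X^n}(\theta) - \ell_{X^n}(\hat\theta_{X^n})\}$ exponentially small on a $\prob_\Theta$-probable event, so that $\pi_{X^n}(\theta) = \prob_\theta\{R(X^n,\theta) \leq R(x^n,\theta)\}$ is forced down because the defining event becomes rare under $\prob_\theta$. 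Either route yields $\sup_{\theta \in H^c}\pi_{X^n}(\theta) = \uPi_{X^n}(H^c) \to 0$, completing the proof.
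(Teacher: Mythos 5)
Your proposal follows essentially the same route as the paper's own justification of Corollary~\ref{cor:consistency}, which is not a formal proof but the (self-declared) informal discussion preceding it: reduce both limits to the Gaussian side via the merging relation \eqref{eq:bvm.H}, then note that the mode $\Theta + n^{-1/2}\Delta_\Theta(X^n)$ of $\gamma_{X^n}$ is consistent for $\Theta$, which lies in the interior of $H$. Your direct argument for the first half---$\pi_{X^n}(\hat\theta_{X^n})=1$ and $\hat\theta_{X^n}\to\Theta$ in the interior of $H$, so $\uPi_{X^n}(H)=1$ eventually---is exactly the paper's ``global mode eventually lies in $H$'' observation, made cleaner by bypassing \eqref{eq:bvm.H}. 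Where you add genuine value is in isolating the one substantive point the paper glosses over: for $\uPi_{X^n}(H^c)\to 0$ one must show that plausibility does not leak out at infinity, since \eqref{eq:bvm} controls $\pi_{X^n}-\gamma_{X^n}$ only on compacts, and the paper's assertion that the maximizer $\tilde\theta_{X^n}(H^c)$ is bounded is never argued. Your annulus step is airtight.

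Two caveats on your tail sketch, neither of which puts you behind the paper but both of which you would need to close for a complete proof. First, a uniform law of large numbers driven by the Lipschitz envelope $m$ controls $n^{-1}\ell_{X^n}(\theta)$ uniformly only on compact sets; uniform exponential smallness of $R(X^n,\theta)$ on the unbounded region $\{\|\theta-\Theta\|>M\}$ requires a Wald-type separation condition (Kullback--Leibler divergence bounded away from zero near infinity, with a suitable envelope there), and this does not follow from consistency of the maximum likelihood estimator alone---consistency locates the argmax, it does not uniformly depress the likelihood surface far from $\Theta$. Second, even granting $R(x^n,\theta)\le r_n\to 0$ on the tail, concluding $\pi_{x^n}(\theta)=G_n^\theta\{R(x^n,\theta)\}\to 0$ there requires $\sup_\theta G_n^\theta(r_n)\to 0$ over that unbounded set of $\theta$, whereas Lemma~\ref{lem:step1} gives $G_n^\theta\to G$ only locally uniformly in $\theta$.
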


From a statistical perspective, this result parallels the Bayesian posterior consistency theorems \citep[e.g.][]{ghosh-etal-book, ghosal.vaart.book}.  In words, it says that, for any hypothesis that can be unequivocally classified as {\em true} or as {\em false}, the possibilistic IM's uncertainty quantification will converge to that classification as $n \to \infty$.  Beyond the familiar statistical consequences, this result also has interesting epistemological implications.  Recently \citet{lin2024} 
described a {\em convergentist} tradition wherein methods of inference are evaluated based their ``convergence to truth'' properties; Lin traces this perspective back to \citet{peirce.cp} and \citet{reichenbach1938}.  So, while IMs deviate from Bayesianism in desirable ways (e.g., can elegantly accommodate the vacuous prior case and offers finite-sample validity), it is still a deviation, so one could ask what makes the IM framework philosophically sound.  A more complete answer to this question can be given (and will be given elsewhere), but Theorem~\ref{thm:bvm} and, in particular, Corollary~\ref{cor:consistency} demonstrate that IMs are at least grounded in Lin's convergentist tradition.  Circling back to the discussion of the IM output's subjective interpretation in Section~\ref{SS:im}, if an agent adopts such an interpretation, then Corollary~\ref{cor:consistency} implies that he/she will be ``right'' as $n \to \infty$.  That is, his/her subjective beliefs in true hypotheses $H$ (with $\Theta$ in the interior) converges to 1 and, similarly, he/she would be willing to buy (resp.~sell) a gamble in a true (resp.~false) $H$ for any amount less than \$1 (resp.~greater than \$0).

Corollary~\ref{cor:consistency} and the discussion in the above paragraph can be generalized considerably.  Recall that the upper probability $\uPi_{X^n}(H)$ is just a Choquet integral of the indicator function $\theta \mapsto 1(\theta \in H)$ with respect to the set function/capacity $\uPi_{X^n}$.  Then the point we want to make here is that there is nothing special about indicator functions.  But before making this point, we first need a definition. 
 
For a function $f: \TT \to \RR$, the Choquet integral with respect to $\uPi_{X^n}$, if it exists, is defined as \citep[][App.~C.2]{lower.previsions.book} 
\[ \uPi_{X^n} f := \int_0^\infty \uPi_{X^n}(f \geq t) \, dt - \int_{-\infty}^0 \{ 1 - \uPi_{X^n}(f \geq t) \} \, dt, \]
where the integrals on the right-hand side are of the usual Riemann variety.  To keep this discussion here relatively simple, we will focus on non-negative functions $f$, so the second term on the right-hand side above can be dropped.  The remaining integrand can be simplified by using the basic properties of possibility measures: 
\begin{equation}
\label{eq:choquet}
\uPi_{X^n}f = \int_0^\infty \Bigl\{ \sup_{\theta: f(\theta) \geq t} \pi_{X^n}(\theta) \Bigr\} \, dt. 
\end{equation}
Equivalent expressions for the Choquet integral with respect to a possibility measure \citep[e.g.,][Prop.~7.14]{lower.previsions.book} have been used in other IM applications \citep[e.g.,][]{imdec}; the above expression is particularly convenient for the point that we are making here. Of course, if $f$ is the indicator function of $H$, then 
\[ \sup_{\theta: f(\theta) \geq t} \pi_{X^n}(\theta) = \begin{cases} \uPi_{X^n}(H) & \text{if $t \in [0,1]$} \\ 0 & \text{otherwise}, \end{cases} \]
so we recover $\uPi_{X^n}f = \uPi_{X^n}(H)$ from \eqref{eq:choquet} as expected.  

To our point, which we will only sketch here, it follows from the definition \eqref{eq:choquet} of the Choquet integral and the simple property \eqref{eq:sups} that 
\[ \bigl| \uPi_{X^n} f - \uGamma_{X^n}f \bigr| \leq \int_0^\infty \sup_{\theta: f(\theta) \geq t} \bigl| \pi_{X^n}(\theta) - \gamma_{X^n}(\theta) \bigr| \, dt. \]
Then the uniform convergence result \eqref{eq:bvm} established in Theorem~\ref{thm:bvm} assists in showing that the upper bound is vanishing and, hence, that the Choquet integrals of $f$ with respect to $\uPi_{X^n}$ and the Gaussian approximation $\uGamma_{X^n}$ merge asymptotically.  For example, if $f$ is a continuous function supported on a compact set $\K$, then $\sup f$ is finite, 
\[ \bigl| \uPi_{X^n} f - \uGamma_{X^n}f \bigr| \leq (\sup f)   \times \sup_{\theta \in \K} \bigl| \pi_{X^n}(\theta) - \gamma_{X^n}(\theta) \bigr|, \]
and the right-hand side vanishes in $\prob_\Theta$-probability as $n \to \infty$.  Of course, even stronger results along this line are possible, including the following generalization of Corollary~\ref{cor:consistency}.  

\begin{cor}
\label{cor:cmt}
Let $f$ be a continuous function on a domain that contains $\Theta$ in its interior.  Under the conditions of Theorem~\ref{thm:bvm}, $\uPi_{X^n} f \to f(\Theta)$ in $\prob_\Theta$-probability. 
\end{cor}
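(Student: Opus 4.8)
The plan is to control $|\uPi_{X^n} f - f(\Theta)|$ by routing through the Gaussian Choquet integral via the triangle inequality,
\[ |\uPi_{X^n} f - f(\Theta)| \leq |\uPi_{X^n} f - \uGamma_{X^n} f| + |\uGamma_{X^n} f - f(\Theta)|, \]
and showing each term is $o_p(1)$ under $\prob_\Theta$. Since both possibility measures will be shown to concentrate on a shrinking neighborhood of $\Theta$, only the values of $f$ near $\Theta$ should matter, so at the outset I would fix a compact neighborhood $\K$ of $\Theta$ in the domain of $f$ and reduce to a bounded, non-negative $f$: adding a constant leaves $\uPi_{X^n}f - f(\Theta)$ unchanged (the capacities are normalized, so the Choquet integral is translation-equivariant), and, as the concentration arguments below make precise, the behavior of $f$ off $\K$ will not affect the limit. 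With $f$ non-negative the single-integral representation \eqref{eq:choquet} is available throughout.

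For the Gaussian term I would use that $\gamma_{X^n}$ is the probability-to-possibility transform of $\nm_D(\mu_n, (nI_\Theta)^{-1})$ with mode $\mu_n = \Theta + n^{-1/2}\Delta_\Theta(X^n)$, and that $\mu_n \to \Theta$ in $\prob_\Theta$-probability because $n^{-1/2}\Delta_\Theta(X^n) \to 0$ (the scaled score \eqref{eq:delta} being $O_p(1)$ by \eqref{eq:score.clt.unif}). Inserting $\gamma_{X^n}$ into \eqref{eq:choquet} and using $\gamma_{X^n}(\mu_n)=1$, I would run a squeeze in the level $t$: given $\epsilon>0$, continuity of $f$ at $\Theta$ supplies $\delta>0$ with $|f-f(\Theta)|<\epsilon$ on $B(\Theta,\delta)$, so for $t<f(\Theta)-\epsilon$ the mode $\mu_n$ eventually lies in $\{f\geq t\}$ and the inner supremum equals $1$, whereas for $t>f(\Theta)+\epsilon$ the level set is disjoint from $B(\Theta,\delta)$ and the inner supremum is at most $\sup_{\|\theta-\Theta\|\geq\delta}\gamma_{X^n}(\theta)$, which tends to $0$ since $(\theta-\mu_n)^\top (nI_\Theta)(\theta-\mu_n)\to\infty$ uniformly off the ball and $F_D\to 1$. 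Integrating these bounds against the bounded range of $t$ yields $f(\Theta)-\epsilon \leq \uGamma_{X^n}f \leq f(\Theta)+\epsilon + o_p(1)$, and letting $\epsilon\downarrow 0$ gives $\uGamma_{X^n}f \to f(\Theta)$.

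For the first term I would start from the Choquet bound already recorded in the text,
\[ |\uPi_{X^n}f - \uGamma_{X^n}f| \leq \int_0^\infty \sup_{\theta: f(\theta)\geq t} |\pi_{X^n}(\theta) - \gamma_{X^n}(\theta)| \, dt, \]
and split each inner supremum over $\K$ and $\K^c$. Over $\K$ the integrand is at most $\sup_{\theta\in\K}|\pi_{X^n}(\theta)-\gamma_{X^n}(\theta)|$, which is $o_p(1)$ by \eqref{eq:bvm} in Theorem~\ref{thm:bvm}, and integrating over the finite $t$-range preserves this. Over $\K^c$ the integrand is at most $\sup_{\theta\in\K^c}\pi_{X^n}(\theta) + \sup_{\theta\in\K^c}\gamma_{X^n}(\theta)$, and the Gaussian piece vanishes exactly as above.

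The remaining piece is the main obstacle: showing $\sup_{\theta\in\K^c}\pi_{X^n}(\theta)\to 0$ in $\prob_\Theta$-probability, which lies outside the reach of Theorem~\ref{thm:bvm} since that result controls the contour only on compacta. The mechanism I would exploit is likelihood separation: because $\pi_{X^n}(\theta)=\prob_\theta\{R(X^n,\theta)\leq R(x^n,\theta)\}$ is small whenever the observed relative likelihood $R(x^n,\theta)$ is small, it suffices to show $\sup_{\theta\in\K^c}R(X^n,\theta)\to 0$. Here the Lipschitz hypothesis of Theorem~\ref{thm:bvm} (the envelope $m$ with $|\ell_\theta-\ell_\vartheta|\leq m\|\theta-\vartheta\|$) together with MLE consistency does the work, yielding a uniform law of large numbers for $n^{-1}\ell_{X^n}$ and hence, by identifiability, a uniform separation of the log-likelihood from its value at $\hat\theta_{X^n}\to\Theta$ outside any neighborhood of $\Theta$. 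The delicate point is making this control genuinely uniform over a possibly unbounded $\K^c$ rather than over a larger compact annulus; a clean route is to first establish that the confidence sets $\{\theta:\pi_{X^n}(\theta)>\alpha\}$ are eventually contained in $\K$ — essentially Corollary~\ref{cor:consistency} applied to shrinking neighborhoods — so that with probability tending to one the entire contour off $\K$ sits below any prescribed level. Combining the two terms then delivers $\uPi_{X^n}f \to f(\Theta)$.
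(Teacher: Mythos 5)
Your decomposition is the same one the paper has in mind: route through the Gaussian Choquet integral, bound $|\uPi_{X^n}f-\uGamma_{X^n}f|$ by the integrated level-set supremum from \eqref{eq:choquet} and \eqref{eq:sups}, and invoke the uniform convergence \eqref{eq:bvm} on a compact neighborhood $\K$ of $\Theta$. The paper only sketches this (and only spells out the display for $f$ continuous with compact support), so the value of your write-up is that you make explicit the two pieces the text leaves implicit: (i) that $\uGamma_{X^n}f\to f(\Theta)$, which you get correctly from the concentration of $\gamma_{X^n}$ at its mode $\Theta+n^{-1/2}\Delta_\Theta(X^n)\to\Theta$ together with a squeeze in the level $t$ using continuity of $f$ at $\Theta$; and (ii) that the contribution of $\K^c$ must be controlled separately, since Theorem~\ref{thm:bvm} says nothing off compacta. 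On point (ii) you make it harder than it needs to be: there is no need for shrinking neighborhoods or a fresh uniform-LLN/identifiability argument, because Corollary~\ref{cor:consistency} applied to the \emph{fixed} hypothesis $H=\mathrm{int}(\K)$ already gives $\sup_{\theta\in\K^c}\pi_{X^n}(\theta)\leq\uPi_{X^n}(H^c)\to 0$ in $\prob_\Theta$-probability, which is exactly the statement that the plausibility regions $\{\theta:\pi_{X^n}(\theta)>\alpha\}$ are eventually inside $\K$. One genuine caveat worth recording: your reduction ``the behavior of $f$ off $\K$ will not affect the limit'' is only safe when $f$ is bounded (or the Choquet integral is otherwise guaranteed to exist), since for unbounded $f$ the tail $\int_{\sup_\K f}^{\infty}\sup_{\theta:f(\theta)\geq t}\pi_{X^n}(\theta)\,dt$ is not controlled by $\sup_{\K^c}\pi_{X^n}\to 0$ alone; the corollary's statement is itself loose on this point, so your implicit boundedness assumption matches the intended scope, but it should be stated rather than folded into the word ``reduce.''
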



\subsection{Technical remarks}
\label{SS:remarks}

\begin{remark}[Bayesian--fiducial--IM connections]
\label{re:credal}
\citet{martin.isipta2023} established a connection between the Bayesian and fiducial solutions and the possibilistic IM solution for the important class of group transformation models \citep[e.g.,][]{eaton1989, schervish1995}, which includes location-scale models, etc.  Specifically, for such models, the default-prior Bayes and fiducial solutions agree, and this common Bayes/fiducial solution was shown to be the inner probabilistic approximation of the possibilistic IM; in other words, the Bayes/fiducial distribution is the ``most diffuse'' member of the possibilistic IM's credal set.  While this Bayes/fiducial--IM connection only holds {\em exactly} for the aforementioned special class of models, Theorem~\ref{thm:bvm} implies that it holds {\em asymptotically} for all regular models.  We like to interpret this through the lens of {\em limit experiments}; see, e.g., \citet[][Ch.~10]{lecam.1986.book} and \citet{vaart1998, vaart.lecam.2002}.  These results say, roughly, that inference in a regular model asymptotically corresponds to inference in a Gaussian location model.  So, if the Bayes/fiducial--to--possibilistic--IM connection holds in Gaussian location models, as shown in \citet{martin.isipta2023}, then one would expect it also to hold asymptotically for all sufficiently regular models.  Theorem~\ref{thm:bvm} confirms this expectation.  

This credal set connection also sheds light on the importance and relevance of the IM's possibilistic structure.  First, recall that statistical inference involves ruling out hypotheses that are implausible based on the data, and validity protects against erroneously assigning small plausibility to true hypotheses (or large belief to false hypotheses).  Then {\em efficiency} in the statistical sense that is relevant to us here concerns how implausible those hypotheses in the tails, away from the true $\Theta$, are deemed to be.  Thanks to the characterization result quoted in \eqref{eq:credal}, the possibilistic IM's credal set---and, hence, the possibilistic IM itself---is completely determined by the tails of the contour function $\pi_{x^n}$.  So, to achieve asymptotic efficiency, it is sufficient that the tails of the IM's contour match the approximately-Gaussian tails of the asymptotically efficient Bayesian posterior.  From this perspective, we find that the limited expressiveness of the possibilistic IM is a virtue: all we have to do is cover the tails efficiently for large $n$, and let consonance take care of validity for each fixed $n$.  With a more expressive quantification of uncertainty, a greater burden would be imposed on the practitioner to balance validity and efficiency.
\end{remark}

\begin{remark}[Conditions of Theorem~\ref{thm:bvm}]
There are some differences between the more-classical Bayesian and fiducial Bernstein--von Mises theorems compared to that above for possibilistic IMs.  The reason is that the Bayesian and fiducial solutions do not directly rely on the maximum likelihood estimator---as the possibilistic IM does in the formation of the relative likelihood---so the corresponding Bernstein--von Mises theorems do not need to make assumptions such as consistency of the maximum likelihood estimator.  One can rectify this difference by replacing the relative likelihood above with something else, e.g., a different kind of normalized likelihood, 
\begin{equation}
\label{eq:normalized.lik}
(x^n, \theta) \mapsto \frac{L_{x^n}(\theta)}{\int L_{x^n}(\vartheta) \, W(d\theta)}, 
\end{equation}
where $W$ is a probability distribution supported on $\TT$.  The large-sample properties of this normalized likelihood \eqref{eq:normalized.lik} would be similar to those of the relative likelihood used here, so we conjecture that the use of this in place of our relative likelihood would produce a different possibilistic IM with the same large-sample properties as in Theorem~\ref{thm:bvm}, but under weaker conditions.  That being said, a further conjecture is that the possibilistic IM based on the normalized likelihood \eqref{eq:normalized.lik} would tend to be less efficient than that based on the relative likelihood in finite samples.  Together, our conjectures suggest that there is no free lunch: one can relax the conditions of Theorem~\ref{thm:bvm} by replacing the relative likelihood with something that does not directly depend on the maximum likelihood estimator, but the price for this relaxation would be a general loss of efficiency in finite samples.  We leave investigation into these (and other) conjectures as a topic for future work.  
\end{remark}

\begin{remark}[Connection to Gaussian random fuzzy numbers]
Interesting connections can be drawn between the possibilistic IM solution, via Theorem~\ref{thm:bvm}, and Thierry Den{\oe}ux's recent framework for uncertain reasoning based on Gaussian random fuzzy numbers.  Starting with the relative likelihood, apply both a standard Taylor approximation and the asymptotic equivalence of the observed and expected Fisher information to get  
\[ R(X^n, \theta) = \exp\{ \ell_{X^n}(\theta) - \ell_{X^n}(\hat\theta_{X^n}) \} \approx \exp\{-\tfrac12 (\theta - \hat\theta_{X^n})^\top (n I_\Theta) (\theta - \hat\theta_{X^n})\}. \]
The right-hand side is the membership function of a Gaussian fuzzy number with mean vector $\hat\theta_{X^n}$ and precision (inverse covariance) matrix $n I_\Theta$.  Since the mean $\hat\theta_{X^n}$ of this Gaussian fuzzy number has (asymptotic) distribution $\nm_D(\Theta, (n I_\Theta)^{-1})$, this has the makings of a Gaussian random fuzzy number as developed in \citet{denoeux.fuzzy.2022, denoeux.fuzzy.2023}.  That is, a belief and plausibility function on $\TT$ could be constructed by averaging the $\hat\theta_{X^n}$-dependent belief and plausibility functions derived from the above Gaussian membership function with respect to the large-sample Gaussian approximation of the sampling distribution of $\hat\theta_{X^n}$.  But our goal and approach are different. We are interested in quantifying uncertainty about $\Theta$, given $X^n$, so our result must depend on $X^n$, i.e., we do not average over $X^n$.  Rather than averaging over $\hat\theta_{X^n}$, we apply a probability-to-possibility transform to the right-hand side of the above display, for fixed $\hat\theta_{X^n}$, resulting in the right-hand side of \eqref{eq:cr.bound.easy}. It is the use of the probability-to-possibility transform throughout the construction that gives the possibilistic IM solution its validity for all $n$, so it should come as no surprise that this transformation continues to play an active role even as $n \to \infty$. 
\end{remark}


\section{Cases involving nuisance parameters}
\label{S:nuisance}

\subsection{Setup}

Recall that we have formulated a statistical model indexed by a parameter $\theta$, taking values in $\TT$, and the goal is quantifying uncertainty about the unknown true value $\Theta$.  It is often the case, however, that a particular feature $\Phi = f(\Theta)$ of $\Theta$ is of primary practical interest.  For example, one might be solely interested in the mean survival time for patients; so, if the model is gamma, with shape and scale parameters, then the quantity of interest is $\text{mean} = \text{shape} \times \text{scale}$, the product.  At least intuitively, if we can quantify uncertainty about $\Theta$, then surely we can derive our quantification of uncertainty about $\Phi$ from this.  In all practical applications, however, the quantity of interest $\Phi$ will be of lower dimension than $\Theta$, so there is reason to expect that efficiency (statistical and/or computational) can be gained by directly targeting lower dimensional $\Phi$ rather than proceeding indirectly by going through the higher dimensional $\Theta$. 

Proceeding more formally, we will re-express the model parameter $\theta$ as a pair $(\phi, \lambda)$, taking values in $\FF \times \LL$, where $\phi$ is the {\em interest parameter} and $\lambda$ is the {\em nuisance parameter}.  Then the true $\Theta$ corresponds to $(\Phi, \Lambda)$.  Note that $\theta$ and $(\phi, \lambda)$ are in one-to-one correspondence, but $\theta$ and $\phi$ are not.  This is because $\phi$ is of lower dimension than $\theta$, and it is precisely this dimension difference that we want to take advantage of.  The question is how to eliminate the nuisance parameter so that we can make efficient marginal inference about $\Phi$.  A few different nuisance-parameter-elimination strategies are available and can be incorporated into the possibilistic IM construction; these are reviewed next.

\subsection{Eliminating nuisance parameters}

\subsubsection{Extension}

Perhaps the most natural approach to eliminate nuisance parameters is via the possibility-theoretic {\em extension principle} \citep[e.g.,][]{zadeh1975a}.  In particular, if $\pi_{x^n}(\theta)$ is the contour function corresponding to the possibilistic IM for the full model parameter $\Theta$, as discussed and analyzed in detail above, then the extension principle ({\sc ex}) defines corresponding marginal possibilistic IM for $\Phi$ based on the contour 
\[ \pi_{x^n}^\text{\sc ex}(\phi) = \sup_{\theta: f(\theta) = \phi} \pi_{x^n}(\theta), \quad \phi \in \FF. \]
This is just the possibilistic analog of integrating a joint probability density function to get the corresponding marginal; here, of course, the appropriate calculus is optimization rather than integration.  This is exactly what was done in Example~\ref{ex:logistic} above for marginal inference on the medial lethal dose in logistic regression. 
 Since $\theta$ and $(\phi,\lambda)$ are in one-to-one correspondence, we can  get ``$\pi_{x^n}(\phi,\lambda)$'' directly from $\pi_{x^n}(\theta)$ and then equivalently write the marginal contour as 
\[ \pi_{x^n}^\text{\sc ex}(\phi) = \sup_{\lambda \in \LL} \pi_{x^n}(\phi,\lambda), \quad \phi \in \FF. \]
Based on the discussion above, the reader might guess that this extension-based approach is not the most efficient---it is an indirect attack on the quantity of interest $\Phi$ through the full parameter $\Theta$.  We will show below that this intuition is correct.

\subsubsection{Profiling}

Recall that the relative likelihood for $\Theta$ is 
\[ R(x^n, \theta) = \frac{L_{x^n}(\theta)}{\sup_\vartheta L_{x^n}(\vartheta)}, \quad \theta \in \TT. \]
The profile relative likelihood for $\Phi = f(\Theta)$ is given by 
\[ R^\text{\sc pr}(x^n,\phi) = \sup_{\theta: f(\theta) = \phi} R(x^n, \theta) = \frac{\sup_{\vartheta: f(\vartheta)=\phi} L_{x^n}(\vartheta)}{\sup_\vartheta L_{x^n}(\vartheta)}, \quad \phi \in \FF. \]
The numerator of $R^\text{\sc pr}(x^n,\phi)$ is called the profile likelihood function for $\Phi$, hence the name {\em relative} profile likelihood for $R^\text{\sc pr}(x^n,\phi)$.  Using the reparametrization $\theta \to (\phi,\lambda)$, the relative profile likelihood can be written as 
\[ R^\text{\sc pr}(x^n, \phi) = \frac{\sup_\lambda L_{x^n}(\phi, \lambda)}{\sup_{\varphi, \lambda} L_{x^n}(\varphi, \lambda)} = \frac{L_{x^n}(\phi, \hat\lambda_{x^n}(\phi))}{L_{x^n}(\hat\phi_{x^n}, \hat\lambda_{x^n})}, \]
where $(\hat\phi_{x^n}, \hat\lambda_{x^n})$ corresponds to the maximum likelihood estimator $\hat\theta_{x^n}$ and $\hat\lambda_{x^n}(\phi)$ is the conditional maximum likelihood estimator of the nuisance parameter when the interest parameter is fixed at $\phi$. 

The key take-away message in, e.g., \citet{murphy.vaart.2000}, is that, at least asymptotically, and under regularity conditions satisfied by most of the models commonly used in practice, the (relative) profile likelihood can be treated as if it were a genuine (relative) likelihood: among other things, $-2\log R^\text{\sc pr}(X^n,\Phi)$ is approximately chi-square.  Following \citet{martin.partial3}, here we propose to construct a marginal possibilistic IM for $\Phi$ by using the relative profile likelihood exactly how we used the relative likelihood in Section~\ref{SS:im}, i.e., by applying the probability-to-possibility transform: 
\begin{align}
\label{eq:mpl.pr0}
\pi_{x^n}^\text{\sc pr}(\phi) & = \sup_{\theta: f(\theta) = \phi} \prob_\theta\bigl\{ R^\text{\sc pr}(X^n, \phi) \leq R^\text{\sc pr}(x^n, \phi) \bigr\} \nonumber \\
& = \sup_{\lambda \in \LL} \prob_{\phi,\lambda}\bigl\{ R^\text{\sc pr}(X^n, \phi) \leq R^\text{\sc pr}(x^n, \phi) \bigr\}, \quad \phi \in \FF, 
\end{align}
where $\prob_{\phi,\lambda}$ is the model $\prob_\theta$ but now expressed in terms of $(\phi,\lambda)$.  The latter expression adds a bit of transparency that can be helpful in certain cases.  For example, if it happened that $R(X^n,\phi)$ was a pivot with respect to $\prob_{\phi,\lambda}$, then the outer supremum over $\lambda$ in the above display can be dropped, reducing the computational complexity, etc.  The result from Murphy and van der Vaart mentioned above says specifically that the relative profile likelihood is an asymptotic pivot, so the $\prob_{\phi,\lambda}$-probability on the right-hand side of \eqref{eq:mpl.pr0} is largely insensitive to $\lambda$.  This means that, for moderate to large samples, we can effectively replace the supremum over $\lambda$ by fixing $\lambda$ at any convenient value, which creates a practical opportunity to enjoy the aforementioned reduction in computational complexity. 


\subsubsection{Quick illustration}

Let $X^n$ denote an iid sample from a normal distribution with unknown mean $\Phi \in \FF = \RR$ and unknown variance $\Lambda \in \LL = \RR^+$.  As the notation suggests, while the entirety of $\Theta = (\Phi, \Lambda)$ is unknown, the goal is inference on the mean $\Phi$.  Data $x^n$ consists of $n=15$ observations with maximum likelihood estimators $\hat\phi_{x^n} = 3$ and $\hat\lambda_{x^n} = 1.5$.  There is no closed-form expression for the joint contour for $(\Phi,\Lambda)$ in this normal example, but this can easily be evaluated on an arbitrary grid of $(\phi, \lambda)$ values via Monte Carlo, since the relative likelihood function is a pivot.  Figure~\ref{fig:normal}(a) shows this joint contour; as expected, its peak is at the maximum likelihood estimator $(3,1.5)$.  The extension-based marginal IM possibility, shown in Figure~\ref{fig:normal}(b), is also easy to evaluate numerically from the joint contour: just maximize over the grid of $\lambda$ values at each $\phi$ value on its grid.  For comparison, the profile-based marginal IM contour---which actually has a closed-form expression \citep[see][Example~3]{martin.partial3}---is shown in red in Figure~\ref{fig:normal}(b).  Notice how the profile-based contour is narrower than the extension-based contour; and since both are valid by construction, it follows that the former is more efficient than the latter.  It is precisely this observation, which is not unique to this normal example, that suggested the conjecture that profile- is more efficient than extension-based marginalization.  

\begin{figure}[t]
\begin{center}
\subfigure[Joint contour for $(\Phi,\Lambda)$]{\scalebox{0.6}{\includegraphics{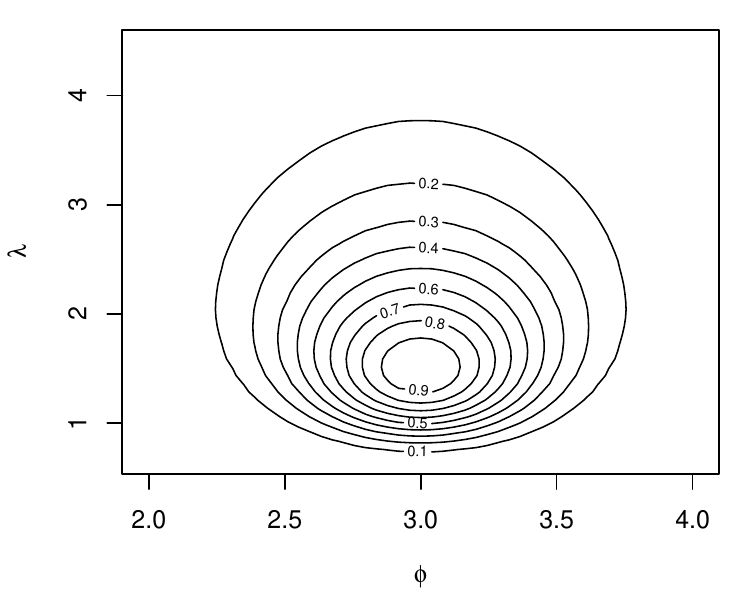}}}
\subfigure[Marginal contour for $\Phi$]{\scalebox{0.6}{\includegraphics{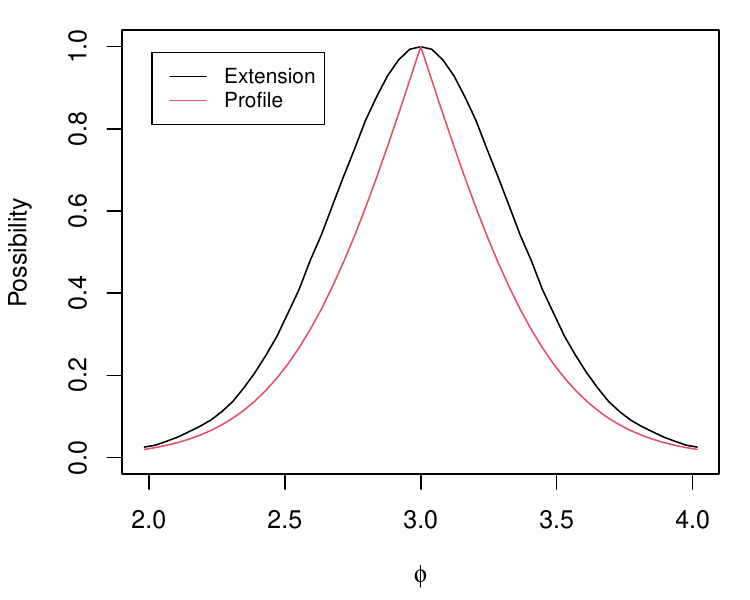}}}
\end{center}
\caption{Data consists of $n=15$ observations from a $\nm(\Phi,\Lambda)$ model, with observed maximum likelihood estimators $\hat\phi_{x^n}=3$ and $\hat\lambda_{x^n}=1.5$. Panel~(a) shows the joint IM contour evaluated based on Monte Carlo, while Panel~(b) shows the extension- and profiling-based marginal IM contour for $\Phi$.}
\label{fig:normal}
\end{figure}


\subsubsection{Other strategies}

We will not go into details here, but it is at least worth mentioning that the two general procedures above are not the only ways to carry out marginalization.  One strategy that works on some occasions is {\em conditioning}, i.e., by replacing the relative likelihood with a relative conditional likelihood.  The point is that, in certain cases, by conditioning on a suitable statistic, the dependence on the nuisance parameter drops out, leaving only a function involving data and interest parameter; \citet{basu1977} refers to the statistic that is being conditioned on as {\em P-sufficient}, where ``P'' is for ``partial''---conditioning on the P-sufficient statistic only eliminates a part of the full parameter.  In other cases, one might ignore the likelihood altogether and build a possibilistic marginal IM from scratch by first identifying a solely interest-parameter-dependent function and then applying the probability-to-possibility transform.  Neither of these approaches are universal; whether they can be carried out successfully depends on the particular problem at hand, which is why we elect not to dive into the details here. 

In a different but related direction, the approach of \citet{immarg} offers a novel perspective on the elimination of nuisance parameters.  The authors' proposal is quite powerful in that it affords users the flexibility to do more-or-less whatever it takes---e.g., model-based strategies like marginalization and algebraic manipulations of data-generating equations---to eliminate the nuisance parameters and reduce the overall dimension of the problem so that efficient inference is within reach.  A disadvantage to their proposal is that, once the nuisance parameter elimination is done, inference requires specification of a suitable ``predictive random set.''  At present, guidelines on how to choose this random set to achieve both statistical and computational efficiency are lacking.

\subsection{Background}

Before presenting our marginal possibilistic Bernstein--von Mises theorem(s), we need some additional background on likelihood-based inference in the presence of nuisance parameters.  In particular, here we define versions of the score function and Fisher information with some tweaks that result from the interest--nuisance parameter decomposition.  

First, recall that the model density/mass function $p_\theta \equiv p_{\phi,\lambda}$ and the corresponding likelihood $L_{x^n}(\theta) \equiv \ell_{x^n}(\phi,\lambda)$ and log-likelihood $\ell_{x^n}(\phi,\lambda)$ are now being expressed in terms of the new parametrization $\theta \to (\phi, \lambda)$.  While these different forms are equivalent, it is important to be clear that $\phi$ and $\lambda$ need not be direct components of $\theta$, so differentiation with respect to $\phi$ and/or $\lambda$ implicitly involves a chain rule calculation; the point is that the details require more than plugging the corresponding $(\phi,\lambda)$ in for $\theta$ in all the expressions above. Write $\dot\ell_{x^n}(\phi,\lambda)$ to denote the derivative of $\ell_{x^n}(\phi,\lambda)$ with respect to $(\phi,\lambda)$; this is the score function for the full parameter $(\Phi,\Lambda)$.  Similarly, define the Fisher information matrix for the full parameter $(\Phi, \Lambda)$ as 
\[ (\phi,\lambda) \mapsto \begin{pmatrix} I_{11}(\phi,\lambda) & I_{12}(\phi,\lambda) \\ I_{21}(\phi,\lambda) & I_{22}(\phi,\lambda) \end{pmatrix}, \]
which, again, can be obtained by combining the original Fisher information matrix $I_\theta=I_{\phi,\lambda}$ with the transformation $\theta \to (\phi,\lambda)$ and the chain rule \citep[e.g.,][Eq.~6.16]{lehmann.casella.1998}.  In the above expression, the matrices $I_{11}(\phi,\lambda)$ and $I_{22}(\phi,\lambda)$ are square, of dimension $D_\phi$ and $D_\lambda$, respectively.  Moreover, if the value of $\Lambda$ happened to be known, then the Fisher information in a sample of size 1 would be given by the matrix $I_{11}(\Phi,\Lambda)$.

An additional wrinkle is that, for the common goal of inference on $\Phi$, there is a potentially dramatic difference between the case where $\Lambda$ is known compared to $\Lambda$ is unknown.  In particular, the latter case typically---and as expected---involves a loss of information compared to the former.  The way this is formalized is through the notion of {\em least-favorable submodels}.  The basic idea is as follows: the most efficient inference about $\Phi$ one can get when $\Lambda$ is unknown cannot be more efficient than---and, in fact, is exactly as efficient as---that corresponding to the worst or most difficult known-$\Lambda$ case.  A bit more specifically, one first constructs a submodel $\{\phi, \lambda(\phi)\}$ indexed solely by $\phi$, and then seeks to choose the mapping $\lambda(\cdot)$ such that the information associated with the corresponding submodel is minimal.  Even more specifically, this least-favorable submodel at the true $(\Phi, \Lambda)$ corresponds to the choice of mapping 
\[ \lambda(\phi) = \Lambda + \{ I_{22}^{-1}(\Phi,\Lambda) \, I_{21}(\Phi, \Lambda)\}^\top (\Phi - \phi), \quad \phi \in \FF. \]
With the same (innocent) abuse of notation as in the previous sections, write $\ell_{\phi,\lambda}(x) = \log p_{\phi,\lambda}(x)$.  Fusing the two parameters via the least-favorable submodel gives $\ell_\phi := \ell_{\phi, \lambda(\phi)}$.  Then the derivative $\tilde\ell_\phi$ of $\ell_\phi$ with respect to $\phi$ is given by 
\[ \tilde\ell_\phi = \tfrac{\partial}{\partial\phi} \ell_{\phi,\lambda} \bigr|_{\lambda=\lambda(\phi)} - \{I_{22}^{-1}(\Phi,\Lambda) \, I_{21}(\Phi, \Lambda)\}^\top \, \tfrac{\partial}{\partial\lambda} \ell_{\phi,\lambda} \bigr|_{\lambda=\lambda(\phi)}, \]
which is just a linear combination of the gradients of $(\phi, \lambda) \mapsto \ell_{\phi,\lambda}$ with respect to $\phi$ and $\lambda$, respectively.  At the true $\Phi$, the vector $\tilde\ell_\Phi = \tilde\ell_\Phi(X)$ is often called the {\em efficient score function} for the sample $X$, and it is easy to check that the expected value of $\tilde\ell_\Phi(X_1)$ is 0 when $X_1 \sim \prob_{\Phi,\Lambda}$, just like in the full model with the full parameter $\Theta$.  Moreover, the covariance matrix of $\tilde\ell_\Phi$, at the true $(\Phi, \Lambda)$, is easily shown to be 
\begin{equation}
\label{eq:efficient.fisher}
\tilde I_{\Phi,\Lambda} = I_{11}(\Phi,\Lambda) - I_{12}(\Phi,\Lambda) \, I_{22}^{-1}(\Phi,\Lambda) \, I_{21}(\Phi,\Lambda). 
\end{equation}
This is often called the {\em efficient Fisher information matrix}.  Notice that this matrix is generally smaller (relative to Loewner order) than the matrix $I_{11}(\Phi,\Lambda)$; this difference in the two information measures is the penalty resulting from $\Lambda$ being unknown rather than known, as the latter information matrix unrealistically assumes.  Note that the two matrices agree, i.e., $\tilde I_{\Phi,\Lambda} = I_{11}(\Phi,\Lambda)$, when $I_{12}(\Phi,\Lambda) = 0$; this corresponds to what is commonly referred to as {\em parameter orthogonality}, which we discuss this further following the statement (and proof sketch) of Theorem~\ref{thm:bvm.pr} below. 

As before, define a full-data, scaled version of the efficient score function as 
\begin{equation}
\label{eq:delta.marg}
\widetilde\Delta_{\Phi,\Lambda}(X^n) = n^{-1/2} \tilde I_{\Phi,\Lambda}^{-1} \sum_{i=1}^n \tilde\ell_\Phi(X_i),
\end{equation}
which converges in distribution to $\nm_{D_\phi}(0, I_{\Phi,\Lambda}^{-1})$ under $\prob_{\Phi,\Lambda}$, by the central limit theorem.  Under certain conditions, a centered and scaled version of the profile maximum likelihood estimator $\hat\phi_{X^n}$ of $\Phi$ is asymptotically equivalent to $\widetilde\Delta_{\Phi,\Lambda}(X^n)$, i.e., 
\begin{equation}
\label{eq:pmle}
n^{1/2}( \hat\phi_{X^n} - \Phi) = \widetilde\Delta_{\Phi,\Lambda}(X^n) + o_{\prob_{\Phi,\Lambda}}(1), \quad n \to \infty, 
\end{equation}
which, of course, implies asymptotic normality of $\hat\phi_{X^n}$, with asymptotic variance $\tilde I_{\Phi,\Lambda}^{-1}$, the inverse of the efficient Fisher information; see, e.g., Equation~(5) in \citet{murphy.vaart.2000}.  This is almost exactly as expected: asymptotic normality of $\hat\phi_{X^n}$ would typically follow from asymptotic normality of $\hat\theta_{X^n}$, together with the delta theorem.  We say ``almost'' because textbook-style application of the delta theorem would suggest the asymptotic variance is $I_{11}^{-1}(\Phi,\Lambda)$, which is typically too small compared to $\tilde I_{\Phi,\Lambda}^{-1}$.

\subsection{Marginal possibilistic Bernstein--von Mises}

As discussed above, our focus is on the case where $\Theta$ is finite-dimensional.  This, of course, implies that both the interest and nuisance parameters, i.e., $\Phi$ and $\Lambda$, are finite-dimensional too.  In this case, we can continue to work under the regularity conditions imposed by Theorem~\ref{thm:bvm}.  These regularity conditions can, however, become problematic in so-called {\em semiparametric problems} where $\Phi$ is finite-dimensional but $\Lambda$ is infinite-dimensional.  In such cases, the results quoted in the previous subsection (and even those presented below) can be established under weaker conditions that can be met in semiparametric problems \citep[e.g.,][]{murphy.vaart.2000}, but we will not consider this here.  There are practical challenges in formulating IMs for infinite-dimensional unknowns, so we save this as a topic for future investigation.  

To set the scene for our main results of this section, first, for simplicity, we will focus solely on the interesting local cases, i.e., along sequences of interest parameter values that are collapsing to $\Phi$ at a $n^{-1/2}$-rate.  More specifically, we consider only $\phi$ values of the form $\phi_n^z = \Phi + n^{-1/2} \, z$, indexed by bounded $z \in \RR^{D_\phi}$.  Second, recall that the profiling-based marginal possibility contour $\pi_{X^n}^\text{\sc pr}$, in \eqref{eq:mpl.pr0}, involved a supremum over all values of the nuisance parameter $\lambda \in \LL$.  This is needed solely because we do not know the value of the true $\Lambda$, and then only way to be {\em absolutely sure} that validity holds at $\Lambda$ is to consider strictly the ``worst case,'' hence largest over all $\lambda$ in $\LL$.  Here, however, we will restrict the supremum to a compact subset $\LL_0$ of $\LL$.  That is, we define the profiling-based marginal possibility contour for $\Phi$ as 
\begin{align}
\pi_{x^n}^\text{\sc pr}(\phi) & = \sup_{\lambda \in \LL_0} \prob_{\phi,\lambda} \{ R^\text{\sc pr}(X^n, \phi) \leq R^\text{\sc pr}(x^n, \phi) \} \notag \\
& = \sup_{\lambda \in \LL_0} G_n^{\phi,\lambda}\{ R^\text{\sc pr}(x^n, \phi) \}, \label{eq:mpl.pr}
\end{align}
where $G_n^{\phi,\lambda}$ denotes the distribution function of $R^\text{\sc pr}(X^n,\phi)$ when $X^n$ are iid $\prob_{\phi,\lambda}$.  This compactness restriction can be justified by an appeal to consistency: if $\hat\lambda_{X^n} = \hat\lambda_{X^n}(\hat\phi_{X^n})$ is, with high probability, close to $\Lambda$, then we can choose $\LL_0$ to be a large compact set that contains $\hat\lambda_{X^n}$ and be {\em pretty sure} that this covers $\Lambda$.  Third, we define the limiting Gaussian approximation under profiling, and under the local parametrization, as 
\begin{equation}
\label{eq:mpl.pr.lim}
\gamma_{X^n}^\text{\sc pr}(\phi_n^z) = G\bigl( \exp\bigl[-\tfrac12\{ z - \widetilde\Delta_{\Phi,\Lambda}(X^n)\}^\top \tilde I_{\Phi,\Lambda} \, \{ z - \widetilde\Delta_{\Phi,\Lambda}(X^n)\} \bigr] \bigr), 
\end{equation}
where $G$ denotes the distribution function of $\exp\{-\frac12 \chisq(D_\phi)\}$.  The following theorem establishes that $\pi_{X^n}^\text{\sc pr}$ in \eqref{eq:mpl.pr} and $\gamma_{X^n}^\text{\sc pr}$ are locally merging in a strong/uniform sense.  In words, the profiling-based marginal IM contour will locally and asymptotically resemble a Gaussian possibility contour with mean vector $\Phi + n^{-1/2} \widetilde\Delta_{\Phi,\Lambda}(X^n)$ and covariance matrix $(n \tilde I_{\Phi,\Lambda})^{-1}$; and the aforementioned mean vector is, according to \eqref{eq:pmle}, equivalent to the profile maximum likelihood estimator $\hat\phi_{X^n}$. 

\begin{thm}[Profiling-based marginalization]
\label{thm:bvm.pr}
Let $(\Phi, \Lambda)$ reside in the interior of the parameter space $\FF \times \LL$, and let $\LL_0$ denote a compact subset of $\LL$ that contains $\Lambda$.  Under the conditions of Theorem~\ref{thm:bvm}, $\pi_{X^n}^\text{\sc pr}$ in \eqref{eq:mpl.pr} satisfies, for $\phi_n^z = \Phi + n^{-1/2} \, z$, 
\[ \sup_{z \in K} \bigl| \pi_{X^n}^\text{\sc pr}(\phi_n^z) - \gamma_{X^n}^\text{\sc pr}(\phi_n^z) \bigr| \to 0 \quad \text{in $\prob_{\Phi,\Lambda}$-probability}, \]
for any compact $K \subset \RR^{D_\phi}$.  
\end{thm}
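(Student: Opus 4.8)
The plan is to mirror the argument behind Theorem~\ref{thm:bvm}, but with the relative profile likelihood $R^\text{\sc pr}$ in place of the relative likelihood and the efficient Fisher information $\tilde I_{\Phi,\Lambda}$ in place of $I_\theta$. The probability-to-possibility transform structure in \eqref{eq:mpl.pr} separates naturally into a ``data side''---the argument $R^\text{\sc pr}(X^n,\phi_n^z)$---and a ``distribution side''---the family of distribution functions $G_n^{\phi_n^z,\lambda}$ indexed by $\lambda \in \LL_0$. First I would control each side with a separate asymptotic input, and then glue them together with the elementary bound \eqref{eq:sups}.

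For the data side, the key input is the profile-likelihood expansion of \citet{murphy.vaart.2000}: under the regularity conditions of Theorem~\ref{thm:bvm}, the relative profile likelihood admits the local quadratic approximation
\[ \log R^\text{\sc pr}(X^n, \phi_n^z) = -\tfrac12 \{z - \widetilde\Delta_{\Phi,\Lambda}(X^n)\}^\top \tilde I_{\Phi,\Lambda} \{z - \widetilde\Delta_{\Phi,\Lambda}(X^n)\} + o_{\prob_{\Phi,\Lambda}}(1), \]
locally uniformly in $z$, where the centering by $\widetilde\Delta_{\Phi,\Lambda}(X^n)$ is justified by \eqref{eq:pmle}. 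Exponentiating and invoking continuity shows that $R^\text{\sc pr}(X^n,\phi_n^z)$ merges, uniformly over $z \in K$, with the quadratic-exponential term $q_z$ appearing inside $G$ in the definition \eqref{eq:mpl.pr.lim} of $\gamma^\text{\sc pr}_{X^n}$. For the distribution side, the relevant input is the companion Wilks-type result that $-2\log R^\text{\sc pr}(X^n, \phi_n^z)$ converges in distribution to $\chisq(D_\phi)$ along the local sequence $\phi_n^z$; equivalently, the distribution function $G_n^{\phi_n^z,\lambda}$ converges pointwise to $G$, the distribution function of $\exp\{-\tfrac12\chisq(D_\phi)\}$. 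Since $G$ is continuous, P\'olya's theorem upgrades this to $\sup_r |G_n^{\phi_n^z,\lambda}(r) - G(r)| \to 0$.

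With both inputs in hand, the proof concludes by noting that $G(q_z)$ is constant in $\lambda$, so $G(q_z) = \sup_{\lambda \in \LL_0} G(q_z)$, and then applying \eqref{eq:sups} to obtain, for each $z \in K$,
\[ \bigl|\pi^\text{\sc pr}_{X^n}(\phi_n^z) - \gamma^\text{\sc pr}_{X^n}(\phi_n^z)\bigr| \leq \sup_{\lambda \in \LL_0} \Bigl\{ \bigl|G_n^{\phi_n^z,\lambda}(R^\text{\sc pr}) - G(R^\text{\sc pr})\bigr| + \bigl|G(R^\text{\sc pr}) - G(q_z)\bigr| \Bigr\}, \]
where $R^\text{\sc pr} = R^\text{\sc pr}(X^n,\phi_n^z)$. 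The first term is bounded by the uniform P\'olya estimate and vanishes; the second vanishes by the data-side expansion together with uniform continuity of $G$, both in $\prob_{\Phi,\Lambda}$-probability and uniformly over $z \in K$.

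I expect the main obstacle to be exactly the uniformity in $\lambda \in \LL_0$. One must verify that the Murphy--van der Vaart expansion and the associated Wilks convergence hold not merely pointwise at the true $(\Phi,\Lambda)$, but uniformly over the nuisance values in the compact set $\LL_0$ and along the local sequences $\phi_n^z$ for $z \in K$; this is what legitimizes interchanging the supremum over $\lambda$ with the limit and collapsing it onto the single limiting function $G$. This is the marginal counterpart of the locally uniform central limit theorem \eqref{eq:score.clt.unif} that underlies the full-parameter Theorem~\ref{thm:bvm}, and it is delivered by the continuity condition in Definition~\ref{def:reg.model}(b) together with compactness of $K$ and $\LL_0$. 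It is the step requiring the most care.
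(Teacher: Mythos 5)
Your proposal is correct and follows essentially the same route as the paper's proof sketch: the same two-term decomposition via \eqref{eq:sups} into a distribution-function term $\sup_{\lambda\in\LL_0}\|G_n^{\phi_n^z,\lambda}-G\|_\infty$ (handled by the Wilks-type convergence, uniformly over the compact set $K\times\LL_0$) and a quadratic-expansion term $|G\{R^\text{\sc pr}(X^n,\phi_n^z)\}-\gamma_{X^n}^\text{\sc pr}(\phi_n^z)|$ (handled by the Murphy--van der Vaart profile expansion together with uniform continuity of $G$). The point you flag as delicate---uniformity of the convergence over $\lambda\in\LL_0$ along the local sequences---is exactly the point the paper addresses by bounding the first term by a supremum over a compact set of $(\phi,\lambda)$ pairs and appealing to the locally uniform arguments from Theorem~\ref{thm:bvm}.
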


\begin{proof}
A sketch of the proof is given in Appendix~\ref{SS:bvn.pr.proof}.
\end{proof}

An interesting observation is that, in the case where $I_{12}=0$, the profile-based marginal IM has an asymptotic Gaussian approximation that agrees with that achievable when the nuisance parameter $\Lambda$ is {\em known}.  This agreement is what \citet{bickel1998} refer to as {\em adaptation} in the context of parameter estimation.  Of course, it is not possible to make better or more efficient inference about $\Phi$ when $\Lambda$ is unknown compared to when $\Lambda$ is known, so this known-$\Lambda$ scenario is a kind of gold-standard.  So, it is remarkable that the profile-based marginal IM is able to achieve adaptive efficiency in these cases.  The condition ``$I_{12}=0$'' is often referred to as {\em parameter orthogonality} as in \citet{CoxReid:1987}, \citet{royall.book}, and elsewhere.  The most familiar example of orthogonal parameters is Gaussian where interest is in the mean (or in the variance).  Usually, the feature $\phi = f(\theta)$ is set by the context of the problem, but the choice of parametrization $\theta \to (\phi,\lambda)$ is at the discretion of the data analyst; in that case, one would be free to choose $\lambda$ such that $\phi$ and $\lambda$ are orthogonal, if possible.  \citet[][Sec.~3]{CoxReid:1987} show a number of such examples, but there are cases where no orthogonal parametrization is available, hence adaptively efficient inference would be out of reach.  

But recall that the profiling-based marginalization is not the only strategy available.  How does the simpler and arguably more natural extension-based marginalization compare asymptotically to the profiling-based marginalization in Theorem~\ref{thm:bvm.pr}?  The next theorem gives the detailed answer, but the take-away message is that extension-based marginalization is less efficient than profiling-based marginalization.  

Just like Theorem~\ref{thm:bvm.pr} above, we have a bit of scene-setting to do before presenting the result.  First, we will again focus our attention on the most interesting local behavior, for $\phi$ of the form $\phi_n^z = \Phi + n^{-1/2} \, z$ for $z$ in a compact set.  Second, we will again restrict attention to a compact subset $\LL_0$ of the nuisance parameter space $\LL$, so that the extension-based marginal IM contour is given by 
\begin{equation}
\label{eq:mpl.ex}
\pi_{x^n}^\text{\sc ex}(\phi) = \sup_{\lambda \in \LL_0} \pi_{x^n}(\phi,\lambda), \quad \phi \in \FF, 
\end{equation}
where $\pi_{x^n}(\phi,\lambda)$ is the original IM possibility contour for $\Theta$ but now re-expressed in terms of the interest--nuisance parameter pair.  Again, this can be justified by a consistency argument.  Third, we define the limiting Gaussian approximation under extension, and under the local parametrization, as 
\begin{equation}
\label{eq:mpl.ex.lim}
\gamma_{X^n}^\text{\sc ex}(\phi_n^z) = G\bigl( \exp\bigl[-\tfrac12\{ z - \widetilde\Delta_{\Phi,\Lambda}(X^n)\}^\top \tilde I_{\Phi,\Lambda} \, \{ z - \widetilde\Delta_{\Phi,\Lambda}(X^n)\} \bigr] \bigr). 
\end{equation}
The above expression is identical to \eqref{eq:mpl.pr.lim}, but there is an implicit difference: here $G$ denotes the distribution function of $\exp\{-\frac12 \chisq(D)\}$, where the degrees of freedom in the aforementioned chi-square is $D := D_\phi + D_\lambda$, potentially much greater than $D_\phi$.  

\begin{thm}[Extension-based marginalization]
\label{thm:bvm.ex}
Let $(\Phi, \Lambda)$ reside in the interior of the parameter space $\FF \times \LL$, and let $\LL_0$ denote a compact subset of $\LL$ that contains $\Lambda$.  Under the conditions of Theorem~\ref{thm:bvm}, $\pi_{X^n}^\text{\sc ex}$ in \eqref{eq:mpl.ex} satisfies, for $\phi_n^z = \Phi + n^{-1/2} \, z$, 
\[ \sup_{z \in K} \bigl| \pi_{X^n}^\text{\sc ex}(\phi_n^z) - \gamma_{X^n}^\text{\sc ex}(\phi_n^z) \bigr| \to 0 \quad \text{in $\prob_{\Phi,\Lambda}$-probability}, \]
for any compact $K \subset \RR^{D_\phi}$. 
\end{thm}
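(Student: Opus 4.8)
The plan is to reduce the extension-based claim to the joint Bernstein--von Mises result of Theorem~\ref{thm:bvm}, together with the closed-form marginalization of a Gaussian possibility contour worked out around \eqref{eq:gauss.extension}. I would work throughout in the $(\phi,\lambda)$ parametrization---which is legitimate because regularity in the sense of Definition~\ref{def:reg.model} is preserved under the smooth reparametrization $\theta \leftrightarrow (\phi,\lambda)$---and first observe that, since $\{\phi_n^z : z \in K\}$ collapses to $\Phi$ as $n \to \infty$, for all large $n$ the set $\{\phi_n^z : z \in K\} \times \LL_0$ is contained in a single fixed compact $\K \subset \FF \times \LL$. Applying Theorem~\ref{thm:bvm} on $\K$ then yields $\sup_{(\phi,\lambda) \in \K} |\pi_{X^n}(\phi,\lambda) - \gamma_{X^n}(\phi,\lambda)| \to 0$ in $\prob_{\Phi,\Lambda}$-probability, where $\gamma_{X^n}$ is the joint Gaussian possibility contour centered at $(\Phi,\Lambda) + n^{-1/2}\Delta$ with covariance $(nI)^{-1}$, and $I$, $\Delta = \Delta_\Theta(X^n)$ of \eqref{eq:delta} are the Fisher information and scaled score in these coordinates.

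Next, because the extension contour $\pi_{X^n}^\text{\sc ex}$ in \eqref{eq:mpl.ex} is a supremum over $\lambda \in \LL_0$, the sup-stability bound \eqref{eq:sups} gives
\[ \sup_{z \in K} \Bigl| \pi_{X^n}^\text{\sc ex}(\phi_n^z) - \sup_{\lambda \in \LL_0} \gamma_{X^n}(\phi_n^z, \lambda) \Bigr| \leq \sup_{(\phi,\lambda) \in \K} \bigl| \pi_{X^n}(\phi,\lambda) - \gamma_{X^n}(\phi,\lambda) \bigr| \to 0. \]
It then remains to identify $\sup_{\lambda \in \LL_0} \gamma_{X^n}(\phi_n^z, \lambda)$ with the stated limit $\gamma_{X^n}^\text{\sc ex}$ of \eqref{eq:mpl.ex.lim}. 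Here I would invoke the Gaussian marginalization leading to \eqref{eq:gauss.extension}: maximizing the joint contour over the nuisance coordinate produces $1 - F_D\{(z - \Delta_\phi)^\top (\Sigma^{11} - \Sigma^{12}(\Sigma^{22})^{-1}\Sigma^{21})(z - \Delta_\phi)\}$, retaining $D = D_\phi + D_\lambda$ degrees of freedom. Since the covariance is $(nI)^{-1}$, the blocks of the inverse covariance are $I$ itself, so the Schur complement equals $I_{11} - I_{12}I_{22}^{-1}I_{21} = \tilde I_{\Phi,\Lambda}$ by \eqref{eq:efficient.fisher}; a short block-inversion calculation likewise identifies the $\phi$-block $\Delta_\phi$ of $\Delta$ with the efficient-score quantity $\widetilde\Delta_{\Phi,\Lambda}(X^n)$ of \eqref{eq:delta.marg}. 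Converting $1 - F_D$ into the distribution function $G$ of $\exp\{-\tfrac12 \chisq(D)\}$ then reproduces \eqref{eq:mpl.ex.lim} exactly.

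The one step requiring genuine care---and the main obstacle---is that the closed form \eqref{eq:gauss.extension} arises from maximizing over all of $\RR^{D_\lambda}$, whereas $\pi_{X^n}^\text{\sc ex}$ and the approximating $\sup_{\lambda \in \LL_0} \gamma_{X^n}$ maximize only over the compact $\LL_0$. I would resolve this by locating the unconstrained conditional maximizer of the Gaussian contour: strict positive-definiteness of the $\lambda\lambda$-block $nI_{22}$ makes the conditional problem strictly convex, with maximizer $\lambda^*(\phi_n^z) = \Lambda + n^{-1/2}\{\Delta_\lambda - I_{22}^{-1}I_{21}(z - \Delta_\phi)\}$. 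Since $\Delta$ is stochastically bounded and $z$ ranges over a compact set, $\lambda^*(\phi_n^z) = \Lambda + O_{\prob_{\Phi,\Lambda}}(n^{-1/2})$ uniformly in $z \in K$; as $\Lambda$ lies in the interior of $\LL_0$, the maximizer falls inside $\LL_0$ with probability tending to one, and on that event the constrained and unconstrained suprema coincide, so $\sup_{\lambda \in \LL_0} \gamma_{X^n}(\phi_n^z, \lambda) = \gamma_{X^n}^\text{\sc ex}(\phi_n^z)$ uniformly in $z \in K$. Combining the two displays via the triangle inequality and a union bound over the two high-probability events completes the argument. Compared with the profiling proof sketched in Appendix~\ref{SS:bvn.pr.proof}, this route is more direct, leaning entirely on the joint Theorem~\ref{thm:bvm} and elementary Gaussian algebra rather than profile-likelihood asymptotics; and the persistence of the full $D$---rather than $D_\phi$---degrees of freedom is precisely the source of the extension method's asymptotic inefficiency relative to profiling.
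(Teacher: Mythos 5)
Your proposal is correct and follows essentially the same route as the paper's own (very brief) argument: use the uniformity of Theorem~\ref{thm:bvm} together with the sup-stability bound \eqref{eq:sups} to replace $\pi_{X^n}$ by the joint Gaussian contour inside the supremum over $\lambda$, and then identify that supremum with \eqref{eq:mpl.ex.lim} via the Gaussian extension computation of \eqref{eq:gauss.extension}, with the Schur complement of $nI_{\Phi,\Lambda}$ yielding $n\tilde I_{\Phi,\Lambda}$ and the $\phi$-block of $\Delta$ yielding $\widetilde\Delta_{\Phi,\Lambda}(X^n)$. The extra care you take to show that the unconstrained conditional maximizer $\lambda^*(\phi_n^z) = \Lambda + O_{\prob_{\Phi,\Lambda}}(n^{-1/2})$ eventually lies in $\LL_0$, so that the constrained and unconstrained suprema agree with probability tending to one, is a detail the paper's sketch glosses over, and it is a worthwhile addition rather than a deviation.
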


\begin{proof}
See Appendix~\ref{SS:bvn.ex.proof}. 
\end{proof}

Our claim above was that the extension-based marginalization strategy is less efficient than the profiling based strategy; in fact, it can be substantially less efficient.  But the reason is not particularly deep: looking at the difference between the two Gaussian limiting contours, the only difference is in the degrees of freedom in the relevant chi-square distributions.  For the profiling-based strategy, the degrees of freedom is exactly the dimension of the interest parameter, $D_\phi$.  For the extension-based marginalization strategy, on the other hand, the degrees of freedom is the full parameter dimension $D = D_\phi + D_\lambda$.  The effect of this dimension mismatch is that the extension-based contour will not be as peaked at the maximum likelihood estimator as the profiling-based contour, making the former overall larger and, thereby, less efficient.  This is exactly what we saw in Figure~\ref{fig:normal} for the simple normal mean model. All that being said, the comparison between the two strategies is not exactly fair, so this should not be interpreted as a criticism.  The reason why this comparison is unfair is that the apparent inefficiency in the extension-based marginalization scheme is fully expected.  The point is that the profiling-based strategy requires the user to commit to a particular interest parameter, and put in the effort to derive the marginal IM contour.  The extension-based strategy, however, can immediately adapt to whatever interest parameter is relevant to the user, all that he/she needs is a suitable optimization routine.  



\subsection{Numerical illustrations}


\begin{ex}
\label{ex:reg.variance}
Consider a typical Gaussial linear regression model, where the response variables, which we will denote here as $X_i$'s, are independent with 
\[ (X_i \mid u_i) \sim \nm(u_i^\top \Lambda, \Phi), \quad i=1,\ldots,n, \]
where $u_i \in \RR^r$ is a fixed vector of covariates, $\Lambda \in \RR^r$ is an unknown vector of regression coefficients, and $\Phi > 0$ is an unknown variance.  In vector form, we have 
\[ (X \mid U) \sim \nm_n(U\Lambda, \Phi \, I_{n \times n}), \]
where $U$ is an $n \times r$ design matrix.  Here we treat the case where $r < n$ is fixed, and assume that $U$ has full rank $r$.  As the notation indicates, our interest is in the variance $\Phi$, hence the vector of regression coefficients $\Lambda$ is a nuisance parameter.  We proceed to construct a marginal possibilistic IM for $\Phi$ using profiling.  

The likelihood function for this standard Gaussian linear regression model is well-known and given by the formula 
\[ L_{x}(\phi, \lambda) = (2\pi \phi)^{-n/2} \exp\bigl\{ -\tfrac{1}{2\phi} \|x - U\lambda\|^2 \bigr\}. \]
Let $P = U(U^\top U)^{-1} U^\top$ denote the matrix that projects onto the space spanned by the columns of $U$, so that $Px = U\hat\lambda_x$, where $\hat\lambda_x$ is the usual least squares estimator.  Then 
\[ \|x - U\lambda\|^2 = \|x - Px + Px - U\lambda\|^2 = \|x - Px\|^2 + \|U\hat\lambda_x - U\lambda\|^2, \]
by the Pythagorean theorem.  This makes it clear that the likelihood $L_x(\phi,\lambda)$ factors into a product of a term that depends on $\lambda$ and a term that does not depend on $\lambda$.  Indeed, 
\[ \sup_\lambda L_x(\phi,\lambda) = (2\pi \phi)^{-n/2} \exp\bigl\{ -\tfrac{1}{2\phi} \|x - Px\|^2 \bigr\}, \]
and, therefore, the relative profile likelihood is 
\[ R^\text{\sc pr}(x, \phi) = \exp\bigl[ -\tfrac{n}{2}\{\hat\phi_x / \phi - 1 - \log(\hat\phi_x/\phi)\} \bigr], \]
where $\hat\phi_x = n^{-1} \|(I-P)x\|^2$ is the maximum likelihood estimator, the usual residual sum of squares scaled by $n^{-1}$.  The key observation is that, if $\Phi=\phi$, then $\|(I-P)X\|^2 / \phi$ is a pivot---it is distributed as $\chisq(n-r)$ irrespective of the values of $\Phi$ or $\Lambda$.  This makes it easy to obtain the profile-based marginal possibilistic IM contour for $\Phi$ via Monte Carlo.  For an illustration, consider the 1970 Boston housing prices data, originally due to \citet{harrison.rubinfeld.1978}, now readily available in the {\tt MASS} function in R. This data consists of information in $n=506$ census tracts, which includes the quantity of primary interest, namely, the median value of owner-occupied homes, and the values of 13 other relevant predictor variables.  A simple linear regression model was fit and the maximum likelihood estimator for the error variance is $\hat\phi_x = 21.94$.  Figure~\ref{fig:reg.variance} shows a plot of the corresponding marginal IM contour for $\Phi$ based on these data.  

For comparison, we can also easily get the corresponding large-sample approximation of the profile-based marginal IM contour.  In this case, the parameters $\Phi$ and $\Lambda$ are orthogonal (i.e., the off-diagonal blocks of the Fisher information matrix are all zeros), so the efficient Fisher information is $I_{11}(\Phi,\Lambda) = (2\Phi^2)^{-1}$; this means that our results are adaptive in the sense that marginal inference on $\Phi$, with unknown $\Lambda$, is as efficient as in the case where $\Lambda$ is known.  Figure~\ref{fig:reg.variance} also shows the asymptotically approximate marginal IM contour for $\Phi$, which closely agrees with the exact contour, at least in the left tail; in the right tail, there is some skewness in the exact contour that the necessarily-symmetric large-sample approximation cannot pick up. 
\end{ex}

\begin{figure}[t]
\begin{center}
\scalebox{0.7}{\includegraphics{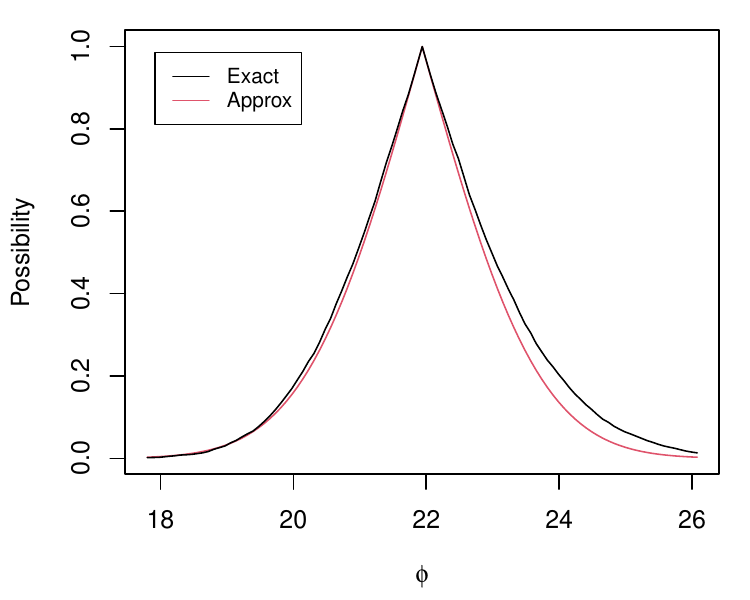}}
\end{center}
\caption{Plot of the exact and approximate profile-based marginal IM contour for the error variance in Gaussian linear regression as described in Example~\ref{ex:reg.variance}, based on a model fit to the Boston housing prices data.}
\label{fig:reg.variance}
\end{figure}

\begin{ex}
\label{ex:gamma.mean}
A surprisingly challenging problem is inference on the mean of a gamma distribution; see, e.g., \citet{shiue.bain.1990}, \citet{fraser.reid.wong.1997}, \citet{martin.partial3}, and the references therein.  Let $X^n=(X_1,\ldots,X_n)$ be an iid sample of size $n$ from a gamma distribution with unknown shape parameter $\Theta_1 > 0$ and unknown scale parameter $\Theta_2 > 0$; write $\Theta=(\Theta_1, \Theta_2)$ for the unknown shape--scale pair.  The quantity of interest is the mean, which, in the shape--scale parametrization, is given by the product $\Phi = \Theta_1 \Theta_2$.  Pairing this with $\Lambda = \Theta_2$, we can readily verify that $(\Phi,\Lambda)$ is in one-to-one correspondence with $\Theta$.  In terms of this mean--scale parametrization, the gamma density function is 
\[ p_{\phi,\lambda}(x) = \{ \lambda^{\phi/\lambda} \, \Gamma(\phi / \lambda)\}^{-1} \, x^{\phi/\lambda-1} \, e^{-x/\lambda}, \quad x > 0. \]
From here, the profiling-based marginal IM construction is at least conceptually straightforward, i.e., we can numerically evaluate the relative profile likelihood and use Monte Carlo to simulate its sampling distribution.  The details are described in Example~6 of \citet{martin.partial3} and we see no need to repeat those here.  But it is worth mentioning that, while the above construction is conceptually straightforward, the actual implementation is non-trivial and somewhat expensive, mostly because the sampling distribution of the relative profile likelihood depends on the nuisance parameter.  The computations can be carried out, however, and Figure~\ref{fig:gamma.mean} shows the corresponding marginal IM possibility contour based on a sample of size $n=30$ from a gamma distribution with $\Theta_1=9$ and $\theta_2=12$, so that the true mean is $\Phi=9 \times 12 = 108$.  The corresponding 95\% confidence interval is, roughly, $(104, 130)$, which happens to contain the true value 108 in this example.  To our knowledge, this marginal IM solution is the best among those other provably, exactly-valid methods available in the literature for inference on the gamma mean. 

For comparison, we can also get an asymptotic approximation.  For this we need the Fisher information, $I_{\phi,\lambda}$, in the mean--scale parametrization.  This can be derived directly from the expression for the gamma density function given above; or one can use the aforementioned chain-rule formula, which involves derivatives of $\theta$ with respect to $(\phi,\lambda)$, to get from the Fisher information in the shape--scale parametrization to that in the mean--scale parametrization.  Either way, the expression is quite messy, and offers no insight, so it is not worth writing down here; the numerical values and any specified $(\phi,\lambda)$ pair, however, are easy to get.  Specifically, we can plug in the maximum likelihood estimators to get $\tilde I_{\hat\phi_{X^n}, \hat\lambda_{X^n}}$ as in \eqref{eq:efficient.fisher}, which can be used to approximate $\tilde I_{\Phi,\Lambda}$ in \eqref{eq:mpl.pr.lim}.  Since the efficient score satisfies $\widetilde\Delta_{\Phi,\Lambda}(X^n)$ satisfies \eqref{eq:pmle}, we can approximate the profiling-based marginal IM contour by 
\[ \pi_{x^n}^\text{\sc pr}(\phi) \approx G\bigl( \exp\bigl\{ -\tfrac12 (\phi - \hat\phi_{x^n})^\top \tilde I_{\hat\phi_{x^n}, \hat\lambda_{x^n}} (\phi - \hat\phi_{x^n}) \bigr\} \bigr), \quad \phi \in \FF, \]
where $G$ is the distribution function of $\exp\{-\frac12 \chisq(1)\}$.  Figure~\ref{fig:gamma.mean} also shows this approximate contour for the same data generated as described above.  Notice that the exact contour has a bit of right skewness in the tail that the symmetric approximation is unable to accommodate.  Overall, however, the approximation is quite accurate, and would only improve with a larger sample size. 
\end{ex}

\begin{figure}[t]
\begin{center}
\scalebox{0.7}{\includegraphics{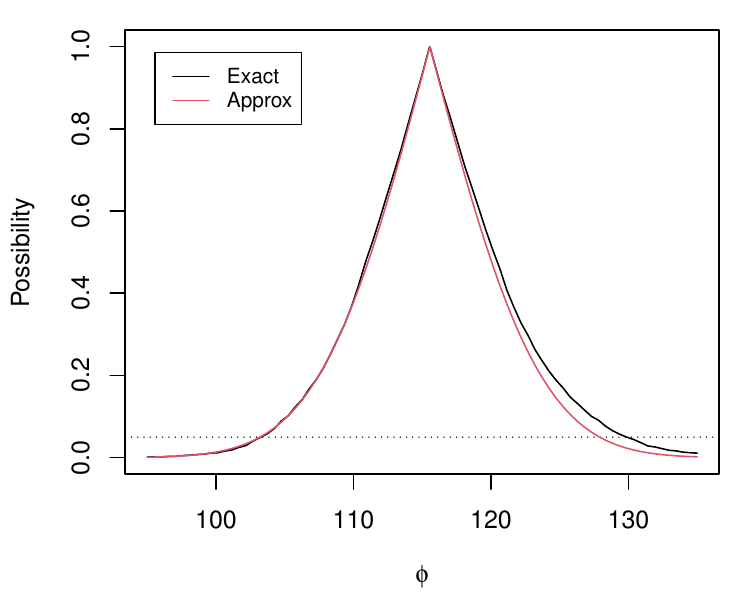}}
\end{center}
\caption{Plot of the exact and approximate profile-based marginal IM contour for the gamma mean as described in Example~\ref{ex:gamma.mean}, where $n=30$ and the true mean is 108.}
\label{fig:gamma.mean}
\end{figure}

\begin{ex}
\label{ex:logistic.again}
Return to the logistic regression problem in Example~\ref{ex:logistic} above.  Suppose here that we are interested in the {\em median lethal dose}, $\Phi=-\Theta_0/\Theta_1$, i.e., the exposure level at which the probability of death is 0.5.  The dotted lines in Figure~\ref{fig:logistic}(a) show that the maximum likelihood estimator is $\hat\phi_n = 0.244$.  That is where we expect any marginal IM contour for $\Phi$ to be centered.  For inference on $\Phi$, a marginal possibility contour can be obtained in (at least) two ways: first, we can apply the extension principle to the large-sample Gaussian approximation as displayed in Figure~\ref{fig:logistic}(b) and, second, we can get the large-sample approximation of the profile-based marginal IM contour as in Theorem~\ref{thm:bvm.pr}.  We say ``at least'' because, in principle, there is an exact marginal IM contour, but this is computationally out of reach at the moment.  Figure~\ref{fig:logistic.again} compares these two approximation marginal IMs for $\Phi$ with the same mice data set as above.  That the profile-based approximation gives a tighter contour is to be expected based on the theory presented above---though the solution here might be a bit too aggressive here given that the sample size ($n=120$) is not-huge and $\Phi$ is a highly non-linear function of $\Theta$.  A point we did not mention before but is worth mentioning here is that the extension principle applied to a Gaussian approximation need not resulting in a Gaussian-looking contour, and this is apparent from the asymmetry in the extension-based contour in Figure~\ref{fig:logistic.again}.  The point is that the extension-based solution has certain benefits even if it fails in comparison to the profile-based solution in terms of asymptotic efficiency.  
\end{ex}

\begin{figure}[t]
\begin{center}
\scalebox{0.7}{\includegraphics{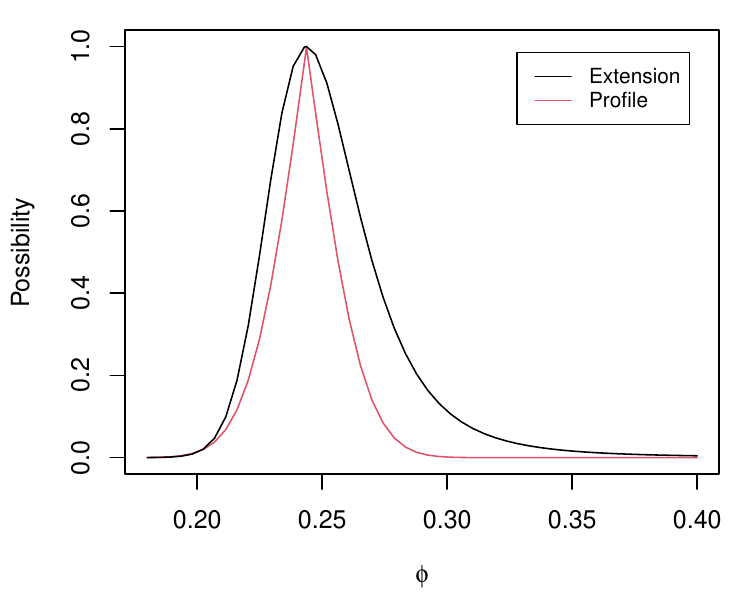}}
\end{center}
\caption{Results for logistic regression in Example~\ref{ex:logistic} and, more specifically, in Example~\ref{ex:logistic.again}.  Plot shows two versions of the marginal possibilistic IM contour for $\Phi = -\Theta_0/\Theta_1$, the median lethal dose.  One is based on applying the extension principle to the large-sample Gaussian contour in Figure~\ref{fig:logistic}(b), and the other is based on the large-sample approximation of the profile-based marginal IM contour.}
\label{fig:logistic.again}
\end{figure}





\section{Conclusion}
\label{S:discuss}

As explained in Section~\ref{S:intro}, it is natural for one to associate imprecision and exact, finite-sample validity with conservatism and inefficiency.  But it has been empirically shown that, to the contrary, the IM solutions---which are both imprecise and exactly valid---tend to be as efficient as those solutions which are only asymptotically valid.  The asymptotic efficiency of the IM solution was, until now, only a conjecture: here we established a new, possibility-theoretic version of the famous Bernstein--von Mises theorem which, among other things, establishes that the IM solutions are asymptotically efficient.  The key take-away message is that the exact validity offered by the IM solution, thanks to its inherent imprecision, requires no sacrifice in terms of asymptotic efficiency.  

On the practical side, the results here offer very simple approximations to the IM solutions.  These asymptotic versions, of course, are not exactly valid, but they are approximately so.  Then the question is if these simple approximations can be embellished on in some clever way such that exact validity can be achieved.  Intuitively, if we formally write down how to construct a provably valid IM in a vacuum, then apparently computation is a challenge; but if we have a pretty good idea what that IM looks like, then that could ease some of the computational burden.  Some preliminary results along these lines are presented in \citet{cella.martin.belief2024}, but further investigation is required.  

One possible extension of the results presented here is to the case where the uncertain $\Theta$ is defined as the minimizer of an expected loss.  This is a generalization of the setup in the present paper just like M-estimation or empirical risk minimization generalizes maximum likelihood estimation.  In this more general case, we replace the negative log-relative likelihood by the empirical regret (i.e., difference between the empirical risk and its minimum), and do the same validification step to construct a possibilistic IM contour \citep{cella.martin.imrisk}.  A similar possibilistic Bernstein--von Mises theorem is within reach for this empirical-risk-driven IM, and we will report on this elsewhere. 

Lastly, two adjacent directions to the possibilistic Bernstein--von Mises result developed here, for the possibilistic IM, are potential avenues for further research.  One direction is to investigate the asymptotic efficiency of the IM framework beyond the restriction to its construction as the probability-to-possibility transform of a relative likelihood, e.g., when a meaningful data generating equation and predictive random set are available.  Surely, these contexts with additional structure should also result in IM contour functions that are asymptotically efficient as in Theorem~\ref{thm:bvm}.  A related corollary based on such a result for the usual IM construction is a description of the asymptotic efficiency of conformal prediction sets, with reference to \cite{imconformal, imconformal.supervised}.  A second direction is the characterization of imprecise Bernstein--von Mises asymptotics beyond the context of possibility theory, to apply to lower-upper probability constructions more generally.  Such results might be of interest, for example, for reconciling belief-plausibility constructions from a subjectivist, Shaferian degrees of belief perspective.


\section*{Acknowledgments}

RM's research is supported by the U.S.~National Science Foundation, under grants SES--2051225 and DMS--2412628.

\appendix

\section{Technical details}

\subsection{Proof of Theorem~\ref{thm:bvm}}
\label{SS:bvn.proof}

To start, recall that the IM contour at a generic $\theta$ value can be expressed as 
\[ \pi_{X^n}(\theta) = G_n^\theta\{ R(X^n, \theta) \}, \quad \theta \in \TT, \]
where $R(X^n,\theta)$ is the relative likelihood and 
\[ G_n^\theta(r) = \prob_\theta\{ R(Y^n, \theta) \leq r \}, \quad r \in [0,1], \]
is the distribution function of the random variable $Y^n \mapsto R(Y^n,\theta)$ under the model with $Y^n$ iid $\prob_\theta$.  Note that $G_n^\theta$ itself does not depend on the data $X^n$, it is just a (possibly complicated) distribution function that depends on $n$ and $\theta$.  The dependence on $X^n$ in $\pi_{X^n}$ comes from the evaluation of $G_n^\theta$ at the relative likelihood $R(X^n,\theta)$.  Our choice to write ``$Y^n$'' in the notation above is to help make it clear that the function $G_n^\theta$ does not depend on the observable data $X^n$.  Then the famous result of \citet{wilks1938} establishes that, under the classical Cram\'er conditions, $-2\log R(Y^n, \theta)$ converges in distribution to a $\chisq(D)$ random variable under $\prob_\theta$, for any (interior point) $\theta \in \TT$.  So, if $G$ denotes the distribution function of the random variable $\exp\{-\frac12 \chisq(D)\}$, then it follows from Wilks and the continuous mapping theorem that $G_n^\theta(r) \to G(r)$ for each $r$.  By the fact that distribution functions are bounded and monotone, this pointwise convergence can be strengthened to uniform convergence:
\[ \|G_n^\theta - G\|_\infty := \sup_{r \in [0,1]} |G_n^\theta(r) - G(r)| \to 0. \]
We will argue below that this same result holds under the weaker regularity conditions assumed here; in fact, it turns out that an even stronger result holds, namely, that the convergence result in the above display holds locally uniformly in $\theta$.




The first key step in the following decomposition:
\begin{align}
\bigl| \pi_{X^n}(\theta) - \gamma_{X^n}(\theta) \bigr| & = \bigl| G_n^{\theta}\{ R(X^n, \theta)\} - \gamma_{X^n}(\theta) \bigr| \notag \\
& = \bigl| G_n^{\theta}\{ R(X^n, \theta) \} - G\{ R(X^n, \theta)\} \bigr| \notag + \bigr| G\{ R(X^n, \theta) \} - \gamma_{X^n}(\theta) \bigr| \notag \\
& \leq \bigl\| G_n^{\theta} - G \bigr\|_\infty + \bigl|  G\{ R(X^n, \theta) \} - \gamma_{X^n}(\theta) \bigr|. \label{eq:bound1}
\end{align}
This decomposition suggests a two-step proof: Lemmas~\ref{lem:step1} and \ref{lem:step2} below show that the first and second terms are suitably vanishing, which will prove the claim.

\begin{lem}
\label{lem:step1}
For any $\theta \in \TT$, let $G_n^\theta$ denote the distribution function of $R(Y^n,\theta)$ when $Y^n=(Y_1,\ldots,Y_n)$ consists of iid samples from $\prob_\theta$.  Under the stated conditions, 
\[ \| G_n^\theta - G \|_\infty \to 0 \quad \text{locally uniformly in $\theta$}, \]
where $G$ is the distribution function of $\exp\{-\frac12 \chisq(D) \}$.  
\end{lem}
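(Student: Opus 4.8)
The plan is to reduce the claim to a locally uniform Wilks-type convergence and then invoke a uniform version of Pólya's theorem. Since $R(Y^n,\theta) = L_{Y^n}(\theta)/L_{Y^n}(\hat\theta_{Y^n})$, I would write $W_n(\theta) = -2\log R(Y^n,\theta) = 2\{\ell_{Y^n}(\hat\theta_{Y^n}) - \ell_{Y^n}(\theta)\}$ for the likelihood-ratio statistic, so that $R(Y^n,\theta) = \exp\{-\tfrac12 W_n(\theta)\}$ is a continuous, strictly decreasing function of $W_n(\theta)$. Then $G_n^\theta$ is the distribution function of $\exp\{-\tfrac12 W_n(\theta)\}$ and $G$ is that of $\exp\{-\tfrac12 \chisq(D)\}$. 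Because $G$ is continuous on $[0,1]$ and does not depend on $\theta$, it suffices to show that $W_n(\theta) \to \chisq(D)$ in distribution, locally uniformly in $\theta$: the continuous mapping theorem transfers this to $R(Y^n,\theta)$, and a uniform Pólya argument upgrades the resulting pointwise-in-$r$ convergence of distribution functions to the sup-norm statement.

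Second, to obtain the distributional limit at a fixed $\theta$, I would establish the standard quadratic (LAN) expansion. Differentiability in quadratic mean (Definition~\ref{def:reg.point}(a)) yields $\ell_{Y^n}(\theta + n^{-1/2}h) - \ell_{Y^n}(\theta) = h^\top I_\theta \Delta_\theta(Y^n) - \tfrac12 h^\top I_\theta h + o_{\prob_\theta}(1)$, with $\Delta_\theta(Y^n)$ the scaled score \eqref{eq:delta}. Combined with consistency of the maximum likelihood estimator (assumed in Theorem~\ref{thm:bvm}) and non-singularity of $I_\theta$, the maximizer satisfies $n^{1/2}(\hat\theta_{Y^n} - \theta) = \Delta_\theta(Y^n) + o_{\prob_\theta}(1)$. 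Substituting this back collapses the cross and quadratic terms into $W_n(\theta) = \Delta_\theta(Y^n)^\top I_\theta \, \Delta_\theta(Y^n) + o_{\prob_\theta}(1)$. Since $\Delta_\theta(Y^n) \to \nm_D(0, I_\theta^{-1})$ by the score central limit theorem, this quadratic form converges to $\chisq(D)$, giving the pointwise-in-$\theta$ conclusion.

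Third---and this is where the real work lies---I would upgrade pointwise-in-$\theta$ convergence to a locally uniform one via a subsequence argument. If $\sup_{\theta \in C}\|G_n^\theta - G\|_\infty \not\to 0$ for some compact $C \subset \TT$, extract $n_k$ and $\theta_k \in C$ with $\|G_{n_k}^{\theta_k} - G\|_\infty \geq \eps$ and, by compactness, $\theta_k \to \theta_\star \in C$; it then suffices to contradict this by showing $W_n(\theta_n) \to \chisq(D)$ in distribution under $\prob_{\theta_n}$ along every convergent sequence $\theta_n \to \theta_\star$. Each ingredient of the second step must be made uniform along such sequences: the locally uniform score central limit theorem \eqref{eq:score.clt.unif} supplies $\Delta_{\theta_n}(Y^n) \to \nm_D(0, I_{\theta_\star}^{-1})$; continuity of $\theta \mapsto I_\theta$ (Definition~\ref{def:reg.model}(b)) lets me replace $I_{\theta_n}$ by $I_{\theta_\star}$ in the quadratic form; and the global Lipschitz bound $|\ell_\theta - \ell_\vartheta| \leq m\,\|\theta - \vartheta\|$ with square-integrable $m$ renders the class $\{\ell_\theta\}$ well-behaved enough (an envelope/Glivenko--Cantelli condition) to make the LAN remainder and the MLE linearization $o_{\prob_{\theta_n}}(1)$ uniformly along the sequence.

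I expect the main obstacle to be precisely this uniformization of the remainder terms. The pointwise Wilks limit is classical, and uniformity in $r$ is automatic from Pólya once the limit is continuous and free of $\theta$; the delicate part is controlling the LAN error and the maximum-likelihood expansion uniformly as $\theta$ drifts across a compact set. The continuity condition in Definition~\ref{def:reg.model}(b) and the envelope $m$ are exactly the hypotheses that buy this uniformity, so the technical heart of the proof is verifying that these two assumptions suffice to bound the relevant empirical-process remainders uniformly and confirming that the limit is the same $\chisq(D)$ along every sequence $\theta_n \to \theta_\star$---which it is, since only the dimension $D$ enters.
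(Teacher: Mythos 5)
Your proposal is correct and follows essentially the same route as the paper's proof: a LAN expansion from differentiability in quadratic mean, reduction of $-2\log R(Y^n,\theta)$ to the quadratic form $\Delta_\theta(Y^n)^\top I_\theta\,\Delta_\theta(Y^n)$, a P\'olya-type upgrade to sup-norm convergence of the distribution functions, and the locally uniform score central limit theorem \eqref{eq:score.clt.unif} together with continuity of $\theta \mapsto I_\theta$ to handle uniformity in $\theta$. The only cosmetic difference is that you substitute the MLE linearization $n^{1/2}(\hat\theta_{Y^n}-\theta)=\Delta_\theta(Y^n)+o_{\prob_\theta}(1)$ into the expansion, whereas the paper writes $-2\log R(Y^n,\theta)$ as a supremum of the log-likelihood ratio process over $\{u:\|u\|\le M_n\}$ and passes the uniform LAN approximation through the supremum via \eqref{eq:sups}---both hinge on the same uniform-in-$u$ control of the LAN remainder that you correctly identify as the technical heart.
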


\begin{proof}
For a generic data set $Y^n$ as introduced above, for a generic $\theta \in \TT$, and for a generic $u \in \RR^D$, write the log-likelihood process as 
\begin{equation}
\label{eq:lan}
\log \frac{L_{Y^n}(\theta + n^{-1/2} u)}{L_{Y^n}(\theta)} = u^\top I_\theta \Delta_\theta(Y^n) - \frac{u^\top I_\theta \, u}{2} + o_{\prob_\theta}(1), \quad n \to \infty, 
\end{equation}
where $\Delta_\theta(Y^n)$ is as in \eqref{eq:delta}.  The above property is called {\em local asymptotic normality} and is a consequence of regularity or, more specifically, of differentiability in quadratic mean \citep[e.g.,][Theorem~7.2]{vaart1998}.  The maximum likelihood estimator, $\hat\theta_{Y^n}$, is consistent by assumption, and $n^{-1/2}$-consistency follows from Corollary~5.53 in \citet{vaart1998}; that is, $n^{1/2}(\hat\theta_{Y^n}-\theta) = O_{\prob_\theta}(1)$.  Therefore, there exists a constant sequence $M_n$, with $M_n \to \infty$, such that $n^{1/2}\|\hat\theta_{Y^n}-\theta\| \leq M_n$ with $\prob_\theta$-probability converging to 1; technically, this $M_n$ can depend on the particular $\theta$, but there are choices of $M_n$ that meet the above requirements simultaneously for all $\theta$ in a given compact set, and so we take $M_n$ to be such a sequence.  Then, with probability converging to 1, the usual likelihood ratio statistic can be written as 
\[ -2 \log R(Y^n, \theta) = 2 \log \frac{L_{Y^n}(\hat\theta_{Y^n})}{L_{Y^n}(\theta)} = 2 \sup_{u: \|u\| \leq M_n} \log\frac{L_{Y^n}(\theta + n^{-1/2} u)}{L_{Y^n}(\theta)}. \]
In the proof of Theorem~7.12 in \citet{vaart1998}, the approximation in \eqref{eq:lan} is shown to hold uniformly over $u$ with $\|u\| \leq M_n$, so we get 
\[ \sup_{u: \|u\| \leq M_n} \Bigl| 2 \log\frac{L_{Y^n}(\theta + n^{-1/2} u)}{L_{Y^n}(\theta)} - \bigl\{ 2u^\top I_\theta \Delta_\theta(Y^n) - u^\top I_\theta \, u \bigr\} \Bigr| \to 0, \quad \text{in $\prob_\theta$-probability}. \]
By the bound in \eqref{eq:sups} and the representation of the likelihood ratio statistic above, the result in the above display implies 
\[\Bigl| -2\log R(Y^n, \theta) - \sup_{u: \|u\| \leq M_n} \bigl\{ 2u^\top I_\theta \Delta_\theta(Y^n) - u^\top I_\theta \, u \bigr\} \Bigr| \to 0, \quad \text{in $\prob_\theta$-probability}. \]
The inner supremum can be easily evaluated in closed-form, since the function is just a quadratic in $u$: 
\[ \sup_{u: \|u\| \leq M_n} \bigl\{ 2u^\top I_\theta \Delta_\theta(Y^n) - u^\top I_\theta \, u \bigr\} = \Delta_\theta(Y^n)^\top \, I_\theta \, \Delta_\theta(Y^n). \]
Since convergence in probability implies convergence in distribution, and we know from \eqref{eq:score.clt.unif} that $\Delta_\theta(Y^n) \to \nm_D(0, I_\theta^{-1})$ in distribution  under $\prob_\theta$, it follows that $\Delta_\theta(Y^n)^\top \, I_\theta \, \Delta_\theta(Y^n)$ and, hence, $-2\log R(Y^n, \theta)$, converges in distribution to $\chisq(D)$ under $\prob_\theta$.  Then the continuous mapping theorem implies that $R(Y^n,\theta) \to \exp\{-\frac12 \chisq(D)\}$ in distribution.  

From this, we can conclude $\| G_n^\theta - G \|_\infty \to 0$ as $n \to \infty$.  But while the above analysis treated $\theta$ as fixed, all the steps apply locally uniformly in $\theta$: the $M_n$ does not depend on $\theta$, $\Delta_\theta(Y^n)$ converges to Gaussian locally uniformly in $\theta$ as in \eqref{eq:delta}, and the mapping $r \mapsto e^{-r/2}$ is uniformly continuous for $r \geq 0$.  We can, therefore, conclude that $\| G_n^\theta - G \|_\infty \to 0$ locally uniformly in $\theta$, which proves the lemma.
\end{proof}

\begin{lem}
\label{lem:step2}
Let $z_n$ be any sequence that is either bounded or is diverging to $\infty$ but no faster than $n^{1/2}$, and then set $\theta_n^{z_n} = \Theta + n^{-1/2} z_n$; in either case, $\theta_n^{z_n}$ is bounded.  Then, under the stated regularity conditions,  
\[ G\{ R(X^n, \theta_n^{z_n}) \} - \gamma_{X^n}(\theta_n^{z_n}) \to 0 \quad \text{in $\prob_\Theta$-probability, $n \to \infty$}. \]
The same conclusion holds even if $z_n$ is a sequence of random vectors having the aforementioned properties with $\prob_\Theta$-probability tending to 1.
\end{lem}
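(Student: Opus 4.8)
The plan is to reduce the claim to a comparison of the two scalar arguments fed into a single chi-square distribution function, and then to invoke the local asymptotic normality (LAN) expansion already used in the proof of Lemma~\ref{lem:step1}. Writing the distribution function $G$ of $\exp\{-\tfrac12\chisq(D)\}$ as $G(r) = 1 - F_D(-2\log r)$, and substituting $\theta_n^{z_n} = \Theta + n^{-1/2} z_n$ together with the mean $\mu_n = \Theta + n^{-1/2}\Delta_\Theta(X^n)$ and covariance $\Sigma_n = (nI_\Theta)^{-1}$ into the definition \eqref{eq:bvm.limit}, the Gaussian contour collapses to
\[ \gamma_{X^n}(\theta_n^{z_n}) = 1 - F_D\bigl\{ (z_n - \Delta_\Theta(X^n))^\top I_\Theta (z_n - \Delta_\Theta(X^n)) \bigr\}. \]
Setting $Q_n := (z_n - \Delta_\Theta(X^n))^\top I_\Theta (z_n - \Delta_\Theta(X^n))$, the target difference is then exactly $F_D(Q_n) - F_D\{-2\log R(X^n, \theta_n^{z_n})\}$. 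Since $F_D$ is bounded, monotone, and continuous, it is uniformly continuous on $\RR$, so it suffices to control the two arguments $-2\log R(X^n, \theta_n^{z_n})$ and $Q_n$.

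Next I would establish, for $z_n$ inside the LAN window, the quadratic expansion $-2\log R(X^n, \theta_n^{z_n}) = Q_n + o_{\prob_\Theta}(1)$. Writing $-2\log R = 2\{\ell_{X^n}(\hat\theta_{X^n}) - \ell_{X^n}(\Theta)\} - 2\{\ell_{X^n}(\theta_n^{z_n}) - \ell_{X^n}(\Theta)\}$ and applying the LAN representation \eqref{eq:lan} centered at the regular point $\Theta$, uniformly over $\|u\| \le M_n$ exactly as in the proof of Lemma~\ref{lem:step1}, to the second bracket with $u = z_n$ and to the first bracket with $u = n^{1/2}(\hat\theta_{X^n} - \Theta)$, and then using the standard MLE linearization $n^{1/2}(\hat\theta_{X^n} - \Theta) = \Delta_\Theta(X^n) + o_{\prob_\Theta}(1)$ (a consequence of the same LAN machinery plus the $n^{1/2}$-rate invoked in Lemma~\ref{lem:step1}), a short computation collapses the two brackets into $(z_n - \Delta_\Theta(X^n))^\top I_\Theta (z_n - \Delta_\Theta(X^n)) = Q_n$ up to $o_{\prob_\Theta}(1)$. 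Combined with the uniform continuity of $F_D$, this disposes of the case $\|z_n\| \le M_n$, and in particular of the bounded-$z_n$ case.

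For the remaining case $\|z_n\| > M_n$ — where $\theta_n^{z_n}$ is too far from $\Theta$ for the quadratic approximation to hold (e.g.\ $\|z_n\| \asymp n^{1/2}$) — the expansion genuinely fails, so I would instead show that both arguments diverge, forcing both $F_D$ values to $1$. On the Gaussian side, positive-definiteness of $I_\Theta$ gives $Q_n \ge \lambda_{\min}(I_\Theta)\,(\|z_n\| - \|\Delta_\Theta(X^n)\|)^2 \to \infty$ in $\prob_\Theta$-probability, since $\|z_n\| > M_n \to \infty$ while $\Delta_\Theta(X^n) = O_{\prob_\Theta}(1)$ by \eqref{eq:score.clt.unif}. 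On the likelihood side, I would combine consistency of the MLE with the envelope/Lipschitz hypothesis of Theorem~\ref{thm:bvm}, namely $|\ell_\theta(x) - \ell_\vartheta(x)| \le m(x)\,\|\theta - \vartheta\|$ with $\int m^2\,d\prob_\Theta < \infty$: this yields a uniform law of large numbers for $\theta \mapsto n^{-1}\ell_{X^n}(\theta)$, and the information inequality then makes the normalized log-likelihood eventually strictly larger at $\hat\theta_{X^n}$ than at $\theta_n^{z_n}$, so that $-2\log R(X^n, \theta_n^{z_n}) \to \infty$ and $F_D\{-2\log R\} \to 1$ as well. A routine subsequence argument assembles the two cases into the single in-probability conclusion, and the final assertion for random $z_n$ is immediate because every estimate above is probabilistic and uniform over the admissible range, hence insensitive to whether $z_n$ is deterministic or a random vector with the stated properties.

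The main obstacle is the second case, and within it the intermediate regime $M_n < \|z_n\| = o(n^{1/2})$: there $\theta_n^{z_n}$ still converges to $\Theta$, just outside the scale on which the quadratic approximation is controlled, so one cannot appeal to a fixed separation from $\Theta$. The uniform law of large numbers must therefore be paired with a quantitative (quadratic) lower bound on the Kullback--Leibler separation near $\Theta$ to guarantee that the log-likelihood ratio still diverges uniformly. It is precisely the envelope condition in Theorem~\ref{thm:bvm} that supplies this uniform control, which is why that hypothesis is imposed.
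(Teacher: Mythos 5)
Your overall architecture is the same as the paper's: reduce the claim to comparing the two arguments of a single continuous distribution function, handle the local window via the LAN expansion \eqref{eq:lan} anchored at $\Theta$ to obtain $-2\log R(X^n,\theta_n^{z_n}) = Q_n + o_{\prob_\Theta}(1)$, and, in the divergent regime, show that both arguments blow up so that both terms of the difference tend to the same limit. The bounded-$z_n$ part is fine and matches the paper essentially line for line (the paper organizes the computation as a supremum over local perturbations of $\theta_n^z$ rather than plugging in the MLE linearization, but the two computations are equivalent and both land on \eqref{eq:wilks1}).

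The gap is in the intermediate regime $M_n < \|z_n\| = o(n^{1/2})$, and your own caveat paragraph only half-fixes it. A uniform law of large numbers for $\theta \mapsto n^{-1}\ell_{X^n}(\theta)$ combined with the information inequality gives $n^{-1}\log R(X^n,\theta_n^{z_n}) \leq -K(p_\Theta, p_{\theta_n^{z_n}}) + o_{\prob_\Theta}(1)$ uniformly; but in this regime $K(p_\Theta, p_{\theta_n^{z_n}}) \asymp n^{-1}\|z_n\|^2 \to 0$, so the deterministic drift is itself swallowed by the ULLN's $o_{\prob_\Theta}(1)$ error and you cannot conclude $\log R(X^n,\theta_n^{z_n}) \to -\infty$. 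Adding a quadratic lower bound on the Kullback--Leibler separation, as you propose, is necessary but not sufficient: the stochastic tool must also be sharpened from a ULLN to a \emph{localized} fluctuation bound. This is exactly what the paper does: the Lipschitz envelope condition makes $\{x \mapsto \ell_\theta(x)-\ell_\Theta(x)\}$ a Donsker class with pointwise variance $v(\theta) \lesssim \|\theta-\Theta\|^2$, so that $\ell_{X^n}(\theta_n^{z_n}) - \ell_{X^n}(\Theta) + nK(p_\Theta,p_{\theta_n^{z_n}}) = \{n\,v(\theta_n^{z_n})\}^{1/2}\,O_{\prob_\Theta}(1) = O_{\prob_\Theta}(\|z_n\|)$ uniformly, and this $O_{\prob_\Theta}(\|z_n\|)$ fluctuation is then dominated by the drift $nK \gtrsim \|z_n\|^2$ precisely because $\|z_n\| \to \infty$. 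Without that variance scaling, the generic uniform CLT/ULLN fluctuation is only $O_{\prob_\Theta}(n^{1/2})$, which does not beat $\|z_n\|^2$ when $\|z_n\| = o(n^{1/2})$. So the argument as you have stated it would fail there; replace ``ULLN $+$ information inequality'' by the Donsker-class bound with the $v(\theta)\lesssim\|\theta-\Theta\|^2$ scaling and the proof closes.
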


\begin{proof}
We will start with the case of bounded $z_n$.  For now, take a fixed $z$ and take $\theta_n^z = \Theta + n^{-1/2} z$.  Then consider a slightly modified log-likelihood ratio process, one where $\theta_n^z$ is the anchor and departures from the anchor are indexed by $u \in \RR^D$:
\begin{align*}
2\log \frac{L_{X^n}(\theta_n^z + n^{-1/2} u)}{L_{X^n}(\theta_n^z)} & = 2\log \frac{L_{X^n}(\Theta + n^{-1/2} (z+u))}{L_{X^n}(\Theta)} - 2\log \frac{L_{X^n}(\Theta + n^{-1/2} z)}{L_{X^n}(\Theta)} \\
& = 2u^\top I_\Theta \, \Delta_\Theta(X^n) - (u^\top I_\Theta \, u - 2 z^\top I_\Theta \, u) + o_{\prob_\Theta}(1).
\end{align*}
The last line follows by applying the local asymptotic normality expansion \eqref{eq:lan} to both terms in the previous line.  Just like in the proof of Lemma~\ref{lem:step1}, the approximation can be made uniform, so that the supremum of the left-hand side---which is equal to $-2\log R(X^n, \theta_n^z)$---can be asymptotically approximated in $\prob_\Theta$-probability by the supremum of the right-hand side.  Here, too, the supremum of the right-hand side has a closed-form expression, so the above discussion implies 
\begin{equation}
\label{eq:wilks1}
-2\log R(X^n, \theta_n^z) - \{z - \Delta_\Theta(X^n)\}^\top I_\Theta \{z - \Delta_\Theta(X^n)\} \to 0, \quad \text{in $\prob_\Theta$-probability}. 
\end{equation}
It follows from, e.g., Proposition~2.1.2 in \citet{bickel1998} that the above convergence also holds locally uniformly in $z$.  Define the function 
\[ f(y) = G(e^{-\frac12 y}), \quad y \geq 0, \]
so that the following two identities hold: 
\begin{align*}
G\{ R(X^n, \theta_n^z) \} & = f\{ -2\log R(X^n, \theta_n^z) \} \\
\gamma_{X^n}(\theta_n^{z}) & = f\bigl[ \{z - \Delta_\Theta(X^n)\}^\top I_\Theta \{z - \Delta_\Theta(X^n)\} \bigr].
\end{align*}
Then the difference between the two left-hand sides is 
\begin{align*}
G\{ R(X^n, & \, \theta_n^z) \} - \gamma_{X^n}(\theta_n^{z}) \\
& = f\{ -2\log R(X^n, \theta_n^z) \} - f\bigl[ \{z - \Delta_\Theta(X^n)\}^\top I_\Theta \, \{z - \Delta_\Theta(X^n)\} \bigr] 
\end{align*}
and, since $f$ is continuous, it follows from \eqref{eq:wilks1} and the continuous mapping theorem that the above difference converges in $\prob_\Theta$-probability to 0 as $n \to \infty$.  It is easy to check that the derivative of $f$ is equal to the negative of the $\chisq(D)$ density function and, therefore, is uniformly bounded in magnitude.  This implies that $f$ is uniformly continuous and, therefore, the locally uniform convergence in $z$, from \eqref{eq:wilks1}, carries over the last display.  Therefore, 
\[ G\{ R(X^n, \theta_n^z) \} - \gamma_{X^n}(\theta_n^{z}) \quad \text{in $\prob_\Theta$-probability, locally uniformly in $z$}, \]
which, of course, implies the lemma's claim for a bounded sequence $z_n$ in place of the fixed $z$ above.  The same holds for a sequence $z_n$ of random vectors that is bounded in $\prob_\Theta$-probability since the intersection of two sets with $\prob_\Theta$-probability tending to 1 also has $\prob_\Theta$-probability tending to 1.  

Next, we investigate the case where $\theta_n^{z_n}$ is bounded but $z_n$ is divergent subject to the constraint $z_n \lesssim n^{1/2}$.  Unlike in the previous case, here $\prob_{\theta_n^{z_n}}$ and $\prob_\Theta$ are not contiguous, so certain likelihood ratios will have extreme limits.  We proceed to prove the lemma's claim by showing that both $G\{ R(X^n, \theta_n^{z_n}) \}$ and $\gamma_{X^n}(\theta_n^{z_n})$ are vanishing $\prob_\Theta$-probability in this case.  First, consider $G\{ R(X^n, \theta)\}$ for a fixed but generic $\theta$ in a bounded neighborhood of $\Theta$.  It is easy to see that 
\[ \log R(X^n, \theta) \leq \ell_{X^n}(\theta) -  \ell_{X^n}(\Theta), \]
and the upper bound is a sum of iid random variables, which we will write as $\widehat\prob_n d_\theta$, with $\widehat\prob_n$ the empirical distribution based on $X^n$ and $d_\theta(x) = \log p_\theta(x) - \log p_\Theta(x)$.  The expected value of $d_\theta(X)$ is $\prob f_\theta = -K(p_\Theta, p_\theta)$, the negative Kullback--Leibler divergence of $\prob_\theta$ from $\prob_\Theta$.  The variance of $d_\theta(X)$ will be denoted by $v(\theta)$, and, by the theorem's Lipschitz condition, is a bounded function of $\theta$.  Then 
\[ \log R(X^n, \theta) \leq \ell_{X^n}(\theta) -  \ell_{X^n}(\Theta) = n^{1/2} \, \widehat{\mathsf{G}}_n d_\theta - n K(p_\Theta, p_\theta), \]
where $\widehat{\mathsf{G}}_n = n^{1/2}(\widehat{\prob}_n - \prob)$ is the empirical process.  Similar to the argument presented in \citet{gombay1997}, the central limit theorem applies to the empirical process term for fixed $\theta$, so that $\widehat{\mathsf{G}}_n d_\theta$ has a Gaussian limit and, in particular, is $O_{\prob_\Theta}(1)$ as $n \to \infty$.  The Lipschitz condition mentioned above implies that the collection of mappings $f_\theta$ indexed by $\theta$ in a bounded neighborhood of $\Theta$ is Donsker \citep[][Example~19.7]{vaart1998} and, therefore, $\theta \mapsto \widehat{\mathsf{G}}_n d_\theta$ converges in distribution to a Gaussian process, uniformly in $\theta$.  Then the above claim ``$\widehat{\mathsf{G}}_n d_\theta = O_{\prob_\Theta}(1)$ as $n \to \infty$'' holds uniformly in $\theta$.  Putting everything together, and replacing the generic bounded $\theta$ with $\theta_n^{z_n}$, we have 
\[ \log R(X^n, \theta_n^{z_n}) \leq \{ n v(\theta_n^{z_n}) \}^{1/2} \, v(\theta_n^{z_n})^{-1/2} \, \widehat{\mathsf{G}}_n d_{\theta_n^{z_n}} - n K(p_\Theta, p_{\theta_n^{z_n}}). \]
The scaled empirical process $v(\theta_n^{z_n})^{-1/2} \, \widehat{\mathsf{G}}_n d_{\theta_n^{z_n}}$ is $O_{\prob_\Theta}(1)$ uniformly in $z_n$ that ensure $\theta_n^{z_n}$ is bounded; the scaling forces the marginal distributions of the Gaussian process to be standard normal.  Again, the theorem's Lipschitz condition implies that 
\[ K(p_\theta, p_{\theta_n^{z_n}}) \lesssim n^{-1} z_n^2 \quad \text{and} \quad v(\theta_n^{z_n}) \lesssim n^{-1} z_n^2. \]
Therefore, since $z_n \to \infty$, we conclude that 
\begin{align*}
\log R(X^n, \theta_n^{z_n}) \leq z_n \, O_{\prob_\Theta}(1) - z_n^2 \to -\infty & \implies R(X^n, \theta_n^{z_n}) \to 0 \\
& \implies G\{ R(X^n, \theta_n^{z_n}) \} \to 0,
\end{align*}
and this conclusion holds in $\prob_\Theta$-probability for all (random) $z_n$ sequences such that $\theta_n^{z_n}$ remains bounded.  This proves the first of our two-part claim. 

For the second part, consider $\gamma_{X^n}(\theta_n^{z_n})$.  In the first part of the proof, recall that 
\[ \gamma_{X^n}(\theta_n^{z_n}) = f\bigl[ \{z_n - \Delta_\Theta(X^n)\}^\top I_\Theta \, \{z_n - \Delta_\Theta(X^n)\} \bigr], \]
for the continuous function $f(y) = G(e^{-y/2})$ defined above.  Since $\Delta_\Theta(X^n)$ is bounded in $\prob_\Theta$-probability, it is clear that the right-hand side above converges in $\prob_\Theta$-probability to $f(\infty) = G(0) = 0$ for any (random) sequence $z_n$ that diverges (with $\prob_\Theta$-probability tending to 1).  We have now taken care of the ``$z_n$ bounded'' and ``$z_n$ divergent but $\lesssim n^{1/2}$'' cases, for both deterministic and random $z_n$, so the proof is complete. 
\end{proof}

\subsection{Proof of Theorem~\ref{thm:bvm.pr}: sketch}
\label{SS:bvn.pr.proof}

Since we already presented a lengthy, thorough proof of Theorem~\ref{thm:bvm}, we only give a (semi-detailed) sketch of the proof of Theorem~\ref{thm:bvm.pr} here.  Just like in the proof of Theorem~\ref{thm:bvm}, we start with the following decomposition: 
\[ \bigl| \pi_{X^n}^\text{\sc pr}(\phi) - \gamma_{X^n}^\text{\sc pr}(\phi) \bigr| 
\leq \Bigl\| \sup_{\lambda \in \LL_0} G_n^{\phi,\lambda} - G \Bigr\|_\infty + \bigl| G\{ R^\text{\sc pr}(X^n, \phi) \} - \gamma_{X^n}^\text{\sc pr}(\phi) \bigr|, \]
where $G_n^{\phi,\lambda}$ and $G$ are as defined previously.  When evaluated at $\phi=\phi_n^z$, the upper bound in the above display satisfies 
\[ \sup_{z \in K} \Bigl\| \sup_{\lambda \in \LL_0} G_n^{\phi_n^z,\lambda} - G \Bigr\|_\infty \leq \sup_{z \in K, \lambda \in \LL_0} \|G_n^{\phi_n^z, \lambda} - G \|_\infty \leq \sup_{(\phi,\lambda) \in \text{compact}} \| G_n^{\phi,\lambda} - G \|_\infty. \]
It turns out that the profile relative likelihood has asymptotic properties very similar to those of the profile likelihood as fleshed out in the proof of Theorem~\ref{thm:bvm} above.  In particular, as presented in, e.g., Equation~(6) of \citet{murphy.vaart.2000}, but with different notation, the profile relative likelihood satisfies 
\begin{equation}
\label{eq:bound.prl}
-2\log R^\text{\sc pr}(X^n, \phi_n^z) = \{ z - \widetilde\Delta_{\Phi,\Lambda}(X^n)\}^\top (n\tilde I_{\Phi,\Lambda}) \, \{ z - \widetilde\Delta_{\Phi,\Lambda}(X^n)\} + o_{\prob_{\Phi,\Lambda}}(1), 
\end{equation}
where the latter error term is uniform for $z$ in a compact set $K$.  So, the same argument made in the proof of Theorem~\ref{thm:bvm} applies here too, so we conclude that 
\[ \sup_{z \in K} \Bigl\| \sup_{\lambda \in \LL_0} G_n^{\phi_n^z,\lambda} - G \Bigr\|_\infty \to 0, \quad \text{in $\prob_{\Phi,\Lambda}$-probability}. \]
By the property \eqref{eq:bound.prl} and the definition of $\gamma_{X^n}^\text{\sc pr}$ in \eqref{eq:mpl.pr.lim}, we clearly also get 
\[ \sup_{z \in K} \bigl| G\{ R^\text{\sc pr}(X^n, \phi_n^z) \} - \gamma_{X^n}^\text{\sc pr}(\phi_n^z) \bigr| \to 0, \quad \text{in $\prob_{\Phi,\Lambda}$-probability}. \]
This completes the (sketch of the) proof of Theorem~\ref{thm:bvm.pr}.

\subsection{Proof of Theorem~\ref{thm:bvm.ex}}
\label{SS:bvn.ex.proof}

The proof is pretty straightforward.  The only thing that $\pi_{X^n}^\text{\sc ex}(\phi)$ could possibly merge with is the supremum of the limiting version of the original contour, i.e., $\gamma_{X^n}(\phi,\lambda)$, over the nuisance parameter $\lambda$.  The uniformity established in Theorem~\ref{thm:bvm} makes this argument rigorous.  Now it is just a matter of carrying out the optimization in the limiting Gaussian contour $\gamma_{X^n}$.  But we did this already in Section~\ref{SS:possibility}---see Equation~\eqref{eq:gauss.extension}---when discussing the extension principle applied to Gaussian possibility measures.  Indeed, that same argument applied to the particular Gaussian here gives a contour of the form \eqref{eq:mpl.ex.lim}, which proves the claim.

\bibliographystyle{apalike}
\bibliography{mybib}

\end{document}